\documentclass[12pt]{amsart}
\usepackage{graphicx}
\usepackage{amsmath, amssymb, mathtools}
\usepackage{amsfonts}
\usepackage{bbm,bm}
\usepackage{hyperref}
\usepackage[margin=1in]{geometry}
\hypersetup{
    colorlinks=true,
    linkcolor=black,
    citecolor=black
}

\newtheorem{theorem}{Theorem}[section]

\newtheorem{proposition}[theorem]{Proposition}
\newtheorem{corollary}{Corollary}[theorem]
\newtheorem{assumption}[theorem]{Assumption}

\theoremstyle{definition}
\newtheorem{definition}[theorem]{Definition}

\theoremstyle{remark}
\newtheorem{remark}{Remark}

\newcommand{\ms}[1]{(\mathbf{#1},\mathcal{#1})}

\title{The Variational Approach in Filtering and Correlated Noise}

\author{Sharan Srinivasan}
\address[S.\ Srinivasan]{Elmore Family School of Electrical and Computer Engineering, Purdue University, West Lafayette, IN, USA}
\email{srini256@purdue.edu}

\author{Vijay Gupta}
\address[V.\ Gupta]{Elmore Family School of Electrical and Computer Engineering, Purdue University, West Lafayette, IN, USA}
\email{gupta869@purdue.edu}

\author{Harsha Honnappa}
\address[H.\ Honnappa]{Edwardson School of Industrial Engineering, Purdue University, West Lafayette, IN, USA}
\email{honnappa@purdue.edu}

\begin{document}

\begin{abstract}
The variational formulation of nonlinear filtering due to Mitter and Newton characterizes the filtering distribution as the unique minimizer of a free energy functional involving the relative entropy with respect to the prior and an expected energy. This formulation rests on an absolute continuity condition between the joint path measure and a product reference measure. We prove that this condition necessarily fails whenever the signal and observation diffusions share a common noise source. Specifically we show that the joint and product measures are mutually singular, so no choice of reference measure can salvage the formulation. We then introduce a conditional variational principle that replaces the prior with a reference measure that preserves the noise correlation structure. This generalization recovers the Mitter--Newton formulation as a special case when the noises are independent, and yields an explicit free energy characterization of the filter in the linear correlated-noise setting.
\end{abstract}

\maketitle

\section{Introduction}

The problem of estimating the state of a stochastic system from partial, noisy observations is among the most fundamental in applied probability and control theory. In the linear Gaussian setting, the Kalman–Bucy filter provides an elegant, finite-dimensional recursive solution. For nonlinear systems, however, the filtering distribution is generally infinite-dimensional, and characterizing it requires more sophisticated tools.

In a landmark paper, Mitter and Newton~\cite{mitter2003variational} formulated a variational approach to this problem, showing that the filtering distribution can be characterized as a Gibbs measure and is the unique minimizer of a free energy functional, the sum of the relative entropy with respect to the prior and an expected energy term. A key application of their framework is to nonlinear filtering of diffusions, where the signal ($X$) and observation ($Y$) processes are driven by independent Brownian motions. This independence of the Brownian motions is essential, as it permits the use of the Cameron–Martin–Girsanov theorem to establish that the joint path measure $P_{XY}$ is absolutely continuous with respect to the product $P_X \otimes \lambda_Y$ for a suitable reference measure $\lambda_Y$
 (Assumption~\ref{as: H1} below). The entire variational formulation rests on this absolute continuity condition. 
 
 On the other hand, filtering with correlated noise has been extensively studied, primarily through the Kushner–Stratonovich and Zakai equations and their robust formulations; see~\cite{clark1978design, elliott1981robust, davis1987pathwise, crisanrough}. These models have also been extended to systems driven by L\'evy noise~\cite{qiaouncorr, qiaocorr} and, more recently, to rough-path frameworks~\cite{allan2025rough,srinivasan2026robust}. In this paper, we show that Mitter and Newton's condition that $P_{XY} \ll P_x \otimes \lambda_Y$ fails, and cannot be rescued, when the signal and observation diffusions share a common noise source. We first present a discrete-time system with correlated noise to build intuition, showing that the Radon–Nikodym derivative between the joint measure and the product measure degenerates in the continuum limit. We then prove that Assumption~\ref{as: H1} can \emph{only} be satisfied when $P_{XY} \ll P_X \otimes P_Y$, so that no choice of reference measure $\lambda_Y$
 can salvage the formulation. In the linear Gaussian case, we establish mutual singularity of the joint and product measures via the Feldman–H\'ajek theorem, which provides a clean dichotomy since Gaussian measures on infinite-dimensional spaces are either equivalent or mutually singular. For the general nonlinear setting, we construct an explicit set that has full measure under $P_{XY}$ and null measure under $P_X \otimes P_Y$, establishing mutual singularity directly. Finally, we present a variational formulation that strictly generalizes~\cite{mitter2003variational} to the correlated noise setting by replacing the prior with a conditional reference measure that preserves the noise coupling structure.

\section{The Variational Formulation}

We first recall the variational approach defined in Section 2 of \cite{mitter2003variational}. Let $(\Omega,\mathcal{F},\mathbb{P})$ be a probability space, and let $\ms{X}$ and $\ms{Y}$ be Borel spaces. Let $X:\Omega\to\mathbf{X}$ and $Y:\Omega\to\mathbf{Y}$ be two measurable mappings with distributions $P_X$ and $P_Y$. Their joint distribution on $\mathcal{X\times Y}$ is $P_{XY}$.
\begin{assumption}\label{as: H1}
    There exists a $\sigma$-finite measure $\lambda_Y$ on $\mathcal{Y}$ such that
    \begin{equation*}
        P_{XY}\ll P_X\otimes\lambda_Y
    \end{equation*}
\end{assumption}

We now define a few sets and quantities:
\begin{definition}
    Define the map $Q:\mathbf{X\times Y}\to[0,\infty)$ as:
    \[
       Q(x,y) := \frac{dP_{XY}}{d(P_X\otimes\lambda_Y)}(x,y),
    \]
    and the set
    \[
        \bar{\mathbf{Y}} := \left\{y\in\mathbf{Y}\vert 0<\int_\mathbf{X}Q(x,y)P_X(dx)<\infty\right\}.
    \]
\end{definition}
We have $\bar{\mathbf{Y}}\in\mathcal{Y}$ and $P_Y(\mathbf{\bar{Y}})=1$.
\begin{definition}
    Let $H:\mathbf{X\times Y}\to(-\infty,\infty]$ be defined by
    \begin{equation*}
        H(x,y) :=\begin{cases}
            -\log(Q(x,y))\quad \text{ if } y\in\mathbf{\bar{Y}},\\
            0 \quad \text{otherwise};
        \end{cases}
    \end{equation*}
\end{definition}
then the Gibbs measure $P_{X|Y}:\mathcal{X}\times\mathbf{Y}\to[0,1]$ defined by
\begin{equation}\label{eq:gibbs form of conditional}
    P_{X|Y}(A,y):=\frac{\int_A\exp(-H(x,y))P_X(dx)}{\int_\mathbf{X}\exp(-H(x,y))P_X(dx)},
\end{equation}
is the conditional probability measure of $X$ given $Y$.

The key proposition \cite[Proposition 2.1]{mitter2003variational} (Proposition \ref{prop:gibbs var}) characterizes $P_{X|Y}$ as the minimizer of a free energy:

\begin{proposition}\label{prop:gibbs var}
    Suppose Assumption \ref{as: H1} is satisfied, then for any $y\in\mathbf{Y}$ such that
    \begin{align*}
        -\int_\mathbf{X}H(x,y)\exp(-H(x,y))P_X(dx)&<\infty \quad (\textnormal{where } +\infty\exp(-\infty)=0),
    \end{align*}
    $P_{X|Y}(\cdot,y)$ is the unique minimizer of
    \begin{equation}\label{eq:free energy}
        \min_{\tilde{P}_X\in\mathcal{P}(\mathcal{X})}\bigl\{h(\tilde{P}_X\vert P_X)+\langle H(\cdot,y),\tilde{P}_X\rangle\bigr\},
    \end{equation}
    where $h(P\vert Q)$ is the relative entropy of the measures $P,Q\in\mathcal{P}(\mathcal{X})$, and
    \begin{equation*}
        \langle H(\cdot,y),\tilde{P}_X\rangle := \begin{cases}
            \int_\mathbf{X}H(x,y)\tilde{P}_X(dx) \,\, \textnormal{if the integral exists},\\
            +\infty \quad \textnormal{otherwise}.
        \end{cases}
    \end{equation*}
\end{proposition}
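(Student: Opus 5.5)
The plan is to use the classical Gibbs variational identity (Donsker--Varadhan type): for a fixed $y$, define the normalizing constant $Z(y):=\int_\mathbf{X}\exp(-H(x,y))P_X(dx)$. For $y\in\bar{\mathbf{Y}}$ we have $\exp(-H)=Q$, so $0<Z(y)<\infty$. For $y\notin\bar{\mathbf{Y}}$ we have $H\equiv 0$, so $Z(y)=1$. In either case the Gibbs measure $P_{X|Y}(\cdot,y)$ of \eqref{eq:gibbs form of conditional} is a well-defined probability measure with density $\exp(-H(\cdot,y))/Z(y)$ with respect to $P_X$. The target identity is that, for every $\tilde P_X\in\mathcal{P}(\mathcal{X})$,
\begin{equation*}
h(\tilde P_X\vert P_X)+\langle H(\cdot,y),\tilde P_X\rangle = h\bigl(\tilde P_X\vert P_{X|Y}(\cdot,y)\bigr)-\log Z(y).
\end{equation*}
Once this is established, the right side is minimized exactly when the relative entropy vanishes, i.e.\ $\tilde P_X=P_{X|Y}(\cdot,y)$. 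This gives both existence and uniqueness, and the minimum value is $-\log Z(y)$.

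\textbf{Step 1: the minimum value is finite.} Compute the value of the functional at $\tilde P_X=P_{X|Y}(\cdot,y)$. Its relative entropy with respect to $P_X$ is $\int(-H-\log Z)\,dP_{X|Y}$. The energy term is $\int H\,dP_{X|Y}=Z^{-1}\int H e^{-H}dP_X$. The positive part of $H$ gives a bounded contribution, since $t e^{-t}\le e^{-1}$. The hypothesis $-\int He^{-H}dP_X<\infty$ controls the negative part of $H$. Hence the energy integral exists and is finite, and the two terms combine to $-\log Z(y)$.

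\textbf{Step 2: the identity for general $\tilde P_X$.} If $\tilde P_X\not\ll P_X$, both sides are $+\infty$. Here one uses that $P_{X|Y}(\cdot,y)\sim P_X$ on $\{H<\infty\}$. If $\tilde P_X$ charges $\{H=\infty\}$, then the energy is $+\infty$ and also $\tilde P_X\not\ll P_{X|Y}$, so the two sides still agree.

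If instead $\tilde P_X\ll P_{X|Y}(\cdot,y)$ with density $f$, write $d\tilde P_X/dP_X=f e^{-H}/Z$. Then, formally,
\begin{equation*}
\log\frac{d\tilde P_X}{dP_X}+H=\log f-\log Z.
\end{equation*}
Integrating against $\tilde P_X$ gives the identity.

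\textbf{Main obstacle: justifying the split of the integral.} The formal computation integrates a sum and splits it into two integrals. This is legitimate only when no $\infty-\infty$ occurs, and it must respect the paper's convention that $\langle H,\tilde P_X\rangle=+\infty$ whenever the integral fails to exist. I would handle it as follows.
\begin{itemize}
\item The negative part of $\log f$ is $\tilde P_X$-integrable, because $\int(\log f)^-f\,dP_{X|Y}\le e^{-1}$. So $h(\tilde P_X\vert P_{X|Y})$ is always well defined in $[0,\infty]$.
\item Suppose $h(\tilde P_X\vert P_{X|Y})<\infty$. Then $\log f\in L^1(\tilde P_X)$. Next show $H^+\in L^1(\tilde P_X)$: on $\{H>2\log f^{+}\}$, the inequality $-H\le\log\frac{d\tilde P_X}{dP_X}$-type bounds together with Young's inequality $ab\le e^{a}+b\log b$ give integrability. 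With $H^+$ integrable, the energy term is a genuine integral, $h(\tilde P_X\vert P_X)$ is finite, and the identity holds termwise.
\item Conversely, suppose the left side is finite. Then the energy integral exists and $h(\tilde P_X\vert P_X)<\infty$. The same decomposition then shows $\log f\in L^1(\tilde P_X)$.
\end{itemize}
Carefully sorting these integrability cases is the only delicate part. Once it is done, uniqueness follows from the fact that relative entropy is strictly positive unless the two measures coincide.
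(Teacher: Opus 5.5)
Your overall route is the paper's own. The paper just cites the Gibbs variational principle, and your rewriting of the free energy as $h(\tilde P_X\vert P_{X|Y}(\cdot,y))-\log Z(y)$ is the standard proof of that principle. Step~1 is correct, and so is the split on $\{H>2\log^+ f\}$, which gives $H^+\in L^1(\tilde P_X)$ without needing Young.

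\textbf{The gap.} In the second bullet you conclude from $h(\tilde P_X\vert P_{X|Y})<\infty$ and $H^+\in L^1(\tilde P_X)$ that $h(\tilde P_X\vert P_X)$ is finite. Since $\log\frac{d\tilde P_X}{dP_X}=\log f-\log Z-H^++H^-$, this needs $H^-\in L^1(\tilde P_X)$, and nothing gives that. The hypothesis only says $H^-\in L^1(P_{X|Y}(\cdot,y))$, and finite relative entropy does not transfer plain integrability.

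\textbf{Counterexample.} Take $\mathbf{X}=\mathbb{N}$ with $e^{-H(k,y)}=e^{k}$ and $P_{X|Y}(\{k\},y)\propto e^{-\sqrt{k}}$, hence $P_X(\{k\})\propto e^{-\sqrt{k}-k}$. This is realizable with $\mathbf{Y}=\mathbb{R}$, $\lambda_Y$ Lebesgue, and conditional densities of $Y$ given $X=k$ equal to $e^k$ at the point $y$. Let $\tilde P_X(\{k\})\propto k^{-2}$. The hypothesis holds since $\sum_k k e^{-\sqrt{k}}<\infty$. Also $h(\tilde P_X\vert P_{X|Y}(\cdot,y))<\infty$ since $\sum_k k^{-2}\sqrt{k}<\infty$. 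Yet $\int H\,d\tilde P_X=-\infty$ and $h(\tilde P_X\vert P_X)=+\infty$. So your identity ``for every $\tilde P_X$'' is false as stated. Its left side is $\infty-\infty$ (or $+\infty$ if ``the integral exists'' means ``is finite''), while its right side is finite. The same $\infty-\infty$ can arise in your case $\tilde P_X\not\ll P_X$, because $H(\cdot,y)$ is arbitrary on $P_X$-null sets.

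\textbf{Repair.} Sort the cases by $h(\tilde P_X\vert P_X)$ rather than by $h(\tilde P_X\vert P_{X|Y})$.

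Suppose $h(\tilde P_X\vert P_X)<\infty$, and put $g=d\tilde P_X/dP_X$. Apply Young's inequality $ab\le e^{a-1}+b\log b$ with $a=H^-$ and $b=g$, and use $\int e^{H^-}\,dP_X\le 1+Z(y)$. This gives $\int H^-\,d\tilde P_X\le e^{-1}(1+Z(y))+h(\tilde P_X\vert P_X)<\infty$; this is where $Z(y)<\infty$ is used. Hence $\langle H(\cdot,y),\tilde P_X\rangle\in(-\infty,\infty]$. Your $H^+$ split, together with $\log f=\log g+H+\log Z$ $\tilde P_X$-a.e., then gives the identity, with both sides $+\infty$ when the energy is $+\infty$.

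Suppose instead $h(\tilde P_X\vert P_X)=\infty$. Then the functional must be declared $+\infty$ by convention, or equivalently you minimize only over finite-entropy $\tilde P_X$, because the statement as written leaves $\infty+(-\infty)$ undefined.

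With this, $h(\tilde P_X\vert P_X)+\langle H(\cdot,y),\tilde P_X\rangle\ge h(\tilde P_X\vert P_{X|Y}(\cdot,y))-\log Z(y)$ for every $\tilde P_X$. Equality holds whenever $h(\tilde P_X\vert P_X)<\infty$, in particular at $P_{X|Y}(\cdot,y)$ by your Step~1. That is all existence and uniqueness require.
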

This follows directly from the Gibbs variational principle and that the conditional measure $P_{X|Y}$ defined in equation (\ref{eq:gibbs form of conditional}) is a Gibbs measure.

\subsection{Path Measures of Diffusion Processes}

The Proposition \ref{prop:gibbs var} can be applied to the filtering setting to obtain the filtering measure. Let $X_.\in C([0,T],\mathbb{R}^d)$ and $Y_.\in C([0,T],\mathbb{R}^n)$ be processes satisfying the following It\^o SDEs:
\begin{align}\label{eq:sig uncorr}
    X_t &= X_0 + \int_0^tb(s,X_s)ds + \int_0^t\sigma(s,X_s)dB_s,\\
    \label{eq:obs uncorr}
    Y_t &= \int_0^th(X_s)ds + W_t,
\end{align}
where $X_0\sim \mu$, a law on $(\mathbb{R}^d,\mathcal{B}^d)$, and $b,\sigma$ and $h$ are measurable mappings. Under suitable regularity conditions on $b,\sigma$ and $h$, the SDEs (\ref{eq:sig uncorr}), (\ref{eq:obs corr}) are unique in law and have a weak solution $(\Omega,\mathcal{F},(\mathcal{F}_t)_{t\in[0,T]},\mathbb{P},(B,W),(X,Y))$, i.e., a filtered probability space $(\Omega,\mathcal{F},(\mathcal{F}_t)_{t\in[0,T]},\mathbb{P})$ carrying a $(d+n)$-dimensional Brownian motion $(B,W)$ and a $(d+n)$-dimensional semi-martingale $(X,Y)$ that satisfy equations (\ref{eq:sig uncorr}), (\ref{eq:obs uncorr}) for all $t\in[0,T]$.

The measure spaces $\ms{X}$ and $\ms{Y}$ become $(C([0,T],\mathbb{R}^d),\mathcal{B}_X)$ and $(C([0,T],\mathbb{R}^n),\mathcal{B}_Y)$, where the path spaces are topologized by the uniform norm (we still use $\ms{X}$ and $\ms{Y}$ for these spaces). Under Assumptions H2, H3 and H4 in \cite{mitter2003variational}, we have a strong solution of the SDE (\ref{eq:sig uncorr}), and $\mathbb{E}[\int_0^t |h(X_s)|^2ds]<\infty$. We further assume that $X_0$ is independent of the $(d+n)$-dimensional Brownian motion $(B,W)$. The integrability condition of $h$ (Novikov condition), allows us to use the Cameron-Martin-Girsanov theorem to define a new measure under which the observation process $Y$ is a standard $n$-dimensional Brownian motion independent of $X$ (independence is manifest from the independence of $B$ and $W$). Thus, we take $\lambda_Y$ in Assumption \ref{as: H1} as the Wiener measure on $\mathcal{Y}$, and we have
\begin{align}
    \frac{dP_{XY}}{d(P_X\otimes\lambda_Y)}(X,Y) &= \exp\left(\int_0^Th(X_s)dY_s-\frac{1}{2}\int_0^T|h(X_s)|^2ds\right).
\end{align}
We need a version of $Q$ that is well defined for all $y\in \mathbf{Y}$. However, from \cite{clark2005robust} we know that we can perform an ``integration by parts'' to obtain a robust version of
\begin{align}
     Q(X,y) &:= e^{\left(y_Th(X_T)-\int_0^Ty_sdh(X_s)-\frac{1}{2}\int_0^T|h(X_s)|^2ds\right)}.
\end{align}
In what follows from Section 3 in \cite{mitter2003variational}, the condition in Proposition \ref{prop:gibbs var} is satisfied and the conditional (path) measure $P_{X|Y}$ is the unique minimizer of equation (\ref{eq:free energy}).

\section{A Discrete Time Example}

As a warm-up exercise, consider the discrete time processes on a finite horizon $n < N\in\mathbb{N}$:
\begin{align}
    \label{eq: signal discrete}
    X_{n+1} &= X_n + W_{n+1} + B_{n+1}, \hspace{10pt} X_0 = 0,\\
    \label{eq: obs discrete}
    Y_{n+1} &= Y_n + W_{n+1},\hspace{10pt} Y_0=0,
\end{align}
where $B, W \sim \mathcal{N}(0,\Delta tI)$ are independent random variables with $\Delta t \in (0,\infty)$.

It is clear that both $X$ and $Y$ are Gaussian and have mean $0$. We have the covariance matrix under the joint distribution:
\begin{equation*}
    \text{Cov}_{XY} = \begin{pmatrix}
        2A&A\\
        A&A
    \end{pmatrix} = \begin{pmatrix}
        2&1\\1&1
    \end{pmatrix}\otimes A.
\end{equation*}
However, under the product measure $P_X\otimes P_Y$, the covariance of X and Y is zero, and we have
\begin{equation*}
    \text{Cov}_{\text{prod}} = \begin{pmatrix}
        2A&0\\
        0&A
    \end{pmatrix} = \begin{pmatrix}
        2&0\\0&1
    \end{pmatrix}\otimes A.
\end{equation*}

The Radon-Nikodym derivative of the two measures is the ratio of the two Gaussian densities:
\begin{align}
    \frac{dP_{XY}}{d(P_X\otimes P_Y)}(\vec{z}) &= 2^{N/2}\exp\left(-\frac{1}{2}\vec{z}^T(\text{Cov}_{XY}^{-1}-\text{Cov}_{\text{prod}}^{-1})\vec{z}\right),
\end{align}
for any $\vec{z}\in \mathbb{R}^{N+N}$. We write the difference of the matrices in the exponent as
\begin{equation*}
    \text{Cov}_{XY}^{-1}-\text{Cov}_{\text{prod}}^{-1} = \frac{1}{\Delta t}\begin{pmatrix}
        1/2&-1\\-1&1
    \end{pmatrix}\otimes \tilde{A}^{-1},
\end{equation*}
where $\tilde{A} = \frac{1}{\Delta t}A$. Since $\det{\tilde{A}} = 1$, we expect the inverse to be of order $\mathcal{O}(1)$.

In order to take the continuum limit, we let $N\Delta t = T$ for some real number $T$ and take the limit $N\to\infty$:
\begin{align}
    \frac{dP_{XY}}{d(P_X\otimes P_Y)}(\vec{z}) &= \exp\left(N\left[\frac{\ln2}{2}-\frac{1}{2T}\vec{z}^T\left(\begin{pmatrix}
        1/2&-1\\-1&1
    \end{pmatrix}\otimes \tilde{A}^{-1}\right)\vec{z}\right]\right).
\end{align}
In the limit, this ratio either goes to zero for all $\vec{z}$ or blows up to infinity depending on whether the term in the square brackets is positive or negative. This shows mutual singularity of the measures.

\section{Failure under Correlation}

The following result shows that Assumption~2.1 can only be satisfied when the joint measure $P_{XY}$ is absolutely continuous with respect to the product of its marginals. In particular, it cannot be salvaged by a clever choice of reference measure $\lambda_Y$.     Recall that $\ms{X}$ and $\ms{Y}$ are standard Borel spaces, and let $P_{XY}$ be a probability measure on $\mathcal{X} \times \mathcal{Y}$ with marginals $P_X$ and $P_Y$.
\begin{proposition}\label{prop:abstract}
    If there exists a $\sigma$-finite measure $\lambda_Y$ on $(\mathbf{Y}, \mathcal{Y})$ such that $P_{XY} \ll P_X \otimes \lambda_Y$, then $P_{XY} \ll P_X \otimes P_Y$.
\end{proposition}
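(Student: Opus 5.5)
Let $Q = dP_{XY}/d(P_X\otimes\lambda_Y)$ as in the Definition above. The plan is to show that $P_Y$ has a density $q$ with respect to $\lambda_Y$. On the set where $q$ is positive we can then divide $Q$ by $q$, and on the set where $q$ vanishes we will show that $P_{XY}$ puts no mass. This exhibits an explicit density of $P_{XY}$ with respect to $P_X\otimes P_Y$.

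\emph{Step 1: the marginal density.} Define $q(y) := \int_{\mathbf{X}} Q(x,y)\,P_X(dx)$. Since $\lambda_Y$ is $\sigma$-finite and $Q\ge 0$ is jointly measurable, Tonelli's theorem applies to $P_X\otimes\lambda_Y$. It shows that $q$ is $\mathcal{Y}$-measurable and that, for every $B\in\mathcal{Y}$,
\[
P_Y(B) = P_{XY}(\mathbf{X}\times B) = \int_B q(y)\,\lambda_Y(dy).
\]
Hence $P_Y\ll\lambda_Y$ with $dP_Y/d\lambda_Y = q$. Because $P_Y$ is a probability measure, $q<\infty$ $\lambda_Y$-a.e., and we may modify $Q$ on a $P_X\otimes\lambda_Y$-null set to assume $q<\infty$ everywhere.

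\emph{Step 2: the null set of $q$.} Let $N := \{y: q(y)=0\}$. Then
\[
P_{XY}(\mathbf{X}\times N) = \int_N q\,d\lambda_Y = 0,
\]
so $P_{XY}$ is concentrated on $\mathbf{X}\times N^c$. This is the step where the apparent freedom in choosing $\lambda_Y$ disappears: any part of $\lambda_Y$ not seen by $P_Y$ is also not seen by $P_{XY}$.

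\emph{Step 3: the candidate density.} Define $R(x,y) := Q(x,y)/q(y)$ for $y\notin N$, and $R(x,y):=0$ for $y\in N$. This $R$ is measurable and nonnegative. Take any $C\in\mathcal{X}\otimes\mathcal{Y}$. Using Step 2, Tonelli, and the relation $\lambda_Y(dy) = P_Y(dy)/q(y)$ on $N^c$ (valid since $q>0$ there), we get
\[
P_{XY}(C) = \int_{N^c}\int_{\mathbf{X}} \mathbbm{1}_C\, Q\, P_X(dx)\,\lambda_Y(dy) = \int_{N^c}\int_{\mathbf{X}}\mathbbm{1}_C\, R\, P_X(dx)\,P_Y(dy).
\]
So $P_{XY}$ has density $R$ with respect to $P_X\otimes P_Y$. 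In particular $P_{XY}\ll P_X\otimes P_Y$. Equivalently, if $(P_X\otimes P_Y)(C)=0$, then the last integral vanishes.

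The main technical point is justifying the change of measure $\lambda_Y\to P_Y$ on $N^c$ inside an iterated integral. This needs the identity $\int_{N^c} g\,d\lambda_Y = \int_{N^c} (g/q)\,dP_Y$ for nonnegative measurable $g$. That identity is standard once $q$ is known to be a finite, measurable density, which Step 1 provides via $\sigma$-finiteness. The standard Borel hypothesis is not essentially needed, beyond guaranteeing that the measure-theoretic manipulations are legitimate.
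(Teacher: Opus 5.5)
Your proof is correct. It uses the same underlying splitting of $\mathbf{Y}$ as the paper, but carries it out constructively.

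The paper takes the Lebesgue decomposition $\lambda_Y=\lambda_Y^{ac}+\lambda_Y^{s}$ relative to $P_Y$, with $\lambda_Y^{s}$ carried by a $P_Y$-null set $S$. It then argues purely with null sets. $P_{XY}$ gives no mass to $\mathbf{X}\times S$, and off that set $P_X\otimes\lambda_Y$ agrees with $P_X\otimes\lambda_Y^{ac}\ll P_X\otimes P_Y$. Its preliminary observation $P_Y\ll\lambda_Y$ is not actually needed for this.

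Your set $N=\{q=0\}$ is a concrete choice of $S$. Here $\lambda_Y|_{N}$ is the singular part and $\lambda_Y|_{N^c}=q^{-1}P_Y|_{N^c}$ is the absolutely continuous part. So in your route the marginal density $q=dP_Y/d\lambda_Y$ produces the decomposition, and you then write down $dP_{XY}/d(P_X\otimes P_Y)=R$ explicitly.

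The paper's argument is shorter and never has to name a density. Yours costs one extra Tonelli computation but yields more:
\begin{enumerate}
\item With your version of $Q$, the set $N^c=\{0<q<\infty\}$ is exactly the set $\bar{\mathbf{Y}}$ of Section~2, so $P_Y(\bar{\mathbf{Y}})=1$ follows at once.
\item For $y\in N^c$, $R(\cdot,y)=e^{-H(\cdot,y)}/\int_{\mathbf{X}}e^{-H(x,y)}\,P_X(dx)$ is precisely the Gibbs density in \eqref{eq:gibbs form of conditional}. This makes the chain-rule identity invoked in the proof of Corollary~\ref{cor:abstract} immediate rather than a separate step.
\end{enumerate}
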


\begin{proof}
    We first show that $P_Y \ll \lambda_Y$. Let $A \in \mathcal{Y}$ with $\lambda_Y(A) = 0$. Then
    \[
        (P_X \otimes \lambda_Y)(\mathcal{X} \times A) = P_X(\mathcal{X}) \cdot \lambda_Y(A) = 0,
    \]
    and since $P_{XY} \ll P_X \otimes \lambda_Y$, we have $P_{XY}(\mathcal{X} \times A) = 0$. By the definition of the marginal, $P_Y(A) = P_{XY}(\mathcal{X} \times A) = 0$, so $P_Y \ll \lambda_Y$.

    Now take the Lebesgue decomposition of $\lambda_Y$ with respect to $P_Y$:
    \[
        \lambda_Y = \lambda_Y^{ac} + \lambda_Y^{s},
    \]
    where $\lambda_Y^{ac} \ll P_Y$ and $\lambda_Y^{s} \perp P_Y$. Let $S \in \mathcal{Y}$ be such that $\lambda_Y^{s}$ is supported on $S$ and $P_Y(S) = 0$.

    Let $E \in \mathcal{X} \otimes \mathcal{Y}$ with $(P_X \otimes P_Y)(E) = 0$. We show $P_{XY}(E) = 0$ by writing
    \[
        P_{XY}(E) = P_{XY}\bigl(E \cap (\mathcal{X} \times S)\bigr) + P_{XY}\bigl(E \cap (\mathcal{X} \times S^c)\bigr).
    \]
    The first term vanishes since $P_{XY}(\mathcal{X} \times S) = P_Y(S) = 0$. For the second term, since $\lambda_Y^{s}$ is supported on $S$,
    \[
        (P_X \otimes \lambda_Y)\bigl(E \cap (\mathcal{X} \times S^c)\bigr) = (P_X \otimes \lambda_Y^{ac})\bigl(E \cap (\mathcal{X} \times S^c)\bigr) = 0,
    \]
    where the last equality holds because $\lambda_Y^{ac} \ll P_Y$ implies $P_X \otimes \lambda_Y^{ac} \ll P_X \otimes P_Y$, and $(P_X \otimes P_Y)(E) = 0$. Since $P_{XY} \ll P_X \otimes \lambda_Y$, the second term also vanishes.
\end{proof}

\begin{corollary}\label{cor:abstract}
    Let $P_{X|Y}(\cdot, y)$ denote a regular conditional distribution of $X$ given $Y$. If
    \[
        P_Y\!\left(\left\{ y \in \mathcal{Y} : P_{X|Y}(\cdot, y) \perp P_X \right\}\right) > 0,
    \]
    then there is no $\sigma$-finite measure $\lambda_Y$ on $\ms{Y}$ satisfying Assumption~2.1.
\end{corollary}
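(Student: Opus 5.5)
We argue by contraposition and reduce the corollary to Proposition~\ref{prop:abstract}. Suppose some $\sigma$-finite $\lambda_Y$ satisfies Assumption~2.1. Then Proposition~\ref{prop:abstract} gives $P_{XY} \ll P_X \otimes P_Y$. It then suffices to show that $P_{X|Y}(\cdot,y) \ll P_X$ for $P_Y$-almost every $y$. This contradicts the hypothesis: a probability measure cannot be both absolutely continuous with respect to and singular to $P_X$, since $\nu \ll P_X$ and $\nu \perp P_X$ force $\nu = 0$. So the set where $P_{X|Y}(\cdot,y)\perp P_X$ would be $P_Y$-null.

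The main step is a disintegration identity. Let $f = dP_{XY}/d(P_X\otimes P_Y)$, a nonnegative jointly measurable function with $\int\!\int f(x,y)\,P_X(dx)P_Y(dy) = 1$. By Tonelli, the section $x \mapsto f(x,y)$ is measurable for every $y$, and $g(y) := \int f(x,y)P_X(dx)$ is measurable. For any $A \in \mathcal{X}$ and $B \in \mathcal{Y}$,
\[
\int_B \int_A f(x,y)\,P_X(dx)\,P_Y(dy) = P_{XY}(A\times B) = \int_B P_{X|Y}(A,y)\,P_Y(dy).
\]
Taking $A=\mathbf{X}$ yields $g = 1$ $P_Y$-a.e. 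Define $\kappa(A,y) := \int_A f(x,y)P_X(dx)$ on the full-measure set $\{g=1\}$, and set $\kappa(\cdot,y) := P_X$ elsewhere. Then $\kappa$ is a regular conditional distribution of $X$ given $Y$. Each $\kappa(\cdot,y)$ is absolutely continuous with respect to $P_X$ by construction.

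It remains to identify $\kappa$ with the given version $P_{X|Y}$ off a $P_Y$-null set. Here I will use that $\mathcal{X}$ is standard Borel, hence countably generated. Fix a countable $\pi$-system $\{A_k\}$ generating $\mathcal{X}$. For each $k$, the display shows that $\kappa(A_k,\cdot) = P_{X|Y}(A_k,\cdot)$ holds $P_Y$-a.e. Taking the union of the countably many exceptional sets gives a single null set $N$. For $y\notin N$, the two probability measures agree on the $\pi$-system, so by Dynkin's $\pi$–$\lambda$ theorem they agree on all of $\mathcal{X}$. Hence $P_{X|Y}(\cdot,y) \ll P_X$ for $P_Y$-a.e. $y$, which completes the contradiction.

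The only delicate point is this a.e.\ uniqueness of regular conditional distributions. It is exactly where the standard Borel assumption stated before Proposition~\ref{prop:abstract} is needed. Everything else is Tonelli and the definition of the marginal.
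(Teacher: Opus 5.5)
Your proposal is correct and follows the paper's proof. The paper argues by contradiction via Proposition~\ref{prop:abstract} and then shows $P_{X|Y}(\cdot,y)\ll P_X$ for $P_Y$-a.e.\ $y$. Your disintegration of $f = dP_{XY}/d(P_X\otimes P_Y)$, combined with the $\pi$--$\lambda$ uniqueness argument on a countable generating $\pi$-system, is a rigorous version of the paper's one-line ``chain rule for densities'' step, which silently skips the choice-of-version issue that you handle explicitly.
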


\begin{proof}
    Suppose for a contradiction that $\lambda_Y$ satisfies Assumption~2.1. By Proposition~\ref{prop:abstract}, $P_{XY} \ll P_X \otimes P_Y$. By the chain rule for densities,
    \[
        \frac{dP_{XY}}{d(P_X \otimes P_Y)}(x, y) = \frac{dP_{X|Y}(\cdot, y)}{dP_X}(x), \quad P_X \otimes P_Y\text{-a.e.}
    \]
    In particular, $P_{X|Y}(\cdot, y) \ll P_X$ for $P_Y$-a.e.\ $y$, contradicting the hypothesis.
\end{proof}

\begin{remark}
    Corollary~\ref{cor:abstract} is stated on abstract standard Borel spaces and makes no reference to diffusions, Brownian motions, or path spaces. It reduces the question of whether the variational formulation of~\cite{mitter2003variational} applies to a given model to a single checkable condition: is the posterior $P_{X|Y}(\cdot, y)$ absolutely continuous with respect to the prior $P_X$? If conditioning on the observation collapses the support of the signal (as occurs whenever signal and observation share a correlated noise source) the variational formulation cannot hold.
\end{remark}

The next section verifies the hypothesis of Corollary~\ref{cor:abstract} for diffusions with correlated noise.

\section{Diffusions with Correlated Noise}

Consider the signal-observation model:
\begin{align}
    \label{eq:sig corr}
    X_t &= X_0 + \int_0^tb(s,X_s)ds+\int_0^t\sigma_0(s,X_s)dB_s+\int_0^t\sigma_1(s,X_s)dW_s,\\
    \label{eq:obs corr}
    Y_t &= \int_0^th(X_s)ds + W_t,
\end{align}
where $X_0\sim \mu$, $b,\sigma_0,\sigma_1,h$ are measurable mappings, and $B$ and $W$ are independent Brownian motions. We use the same measure spaces as in Section 2. Let $X$, $Y$ and $(X,Y)$ have distributions $P_X$, $P_Y$ and $P_{XY}$, respectively.

\subsection{Gaussian Case}

We first verify the hypothesis of Corollary~\ref{cor:abstract} in the linear Gaussian setting, where the Feldman-Hajek theorem (see \cite[Theorem 2.7.2]{bogachev1998gaussian}) provides the clean dichotomy that Gaussian measures on path spaces are either equivalent or mutually singular. We take the diffusion coefficients $\sigma_0$ and $\sigma_1$ to be state-independent and $b(t,\cdot):\mathbb{R}^d\to\mathbb{R}^d$ linear, so that (\ref{eq:sig corr}) becomes:
\begin{equation}
    \label{eq:sig gaussian}
    X_t = X_0 + \int_0^tb(s,X_s)ds+\int_0^t\sigma_0(s)dB_s+\int_0^t\sigma_1(s)dW_s
\end{equation}

Recall the following definitions.
\begin{definition}
    Let $\mathbf{X}$ be a locally convex space, and let $\mu$  be a measure on $\ms{X}$ such that $\mathbf{X}^*\subset L^2(\mu)$. The mean of $\mu$, denoted $a_\mu$, is an element of $(\mathbf{X}^*)'$ defined as:
    \[
        a_\mu(f) := \int_\mathbf{X}f(x)\mu(dx),
    \]
    where $(\mathbf{X}^*)'$ is the space of linear functionals on $\mathbf{X}^*$.
    The operator $R_\mu:\mathbf{X}^*\to(\mathbf{X}^*)'$ defined as:
    \begin{equation}
        R_\mu(f)(g):=\int_\mathbf{X}[f(x)-a_\mu(f)][g(x)-a_\mu(g)]\mu(dx),
    \end{equation}
    is called the covariance operator of $\mu$.
\end{definition}

Let $\gamma$ be a Gaussian measure on $\ms{X}$, define a norm on $\mathbf{X}$ as
\[
    |h|_{H(\gamma)} := \sup\{l(h): l\in \mathbf{X}^*,R_\gamma(l)(l)\leq 1\}.
\]
\begin{definition}
    The Cameron-Martin space of $\gamma$ ($H(\gamma)$) is defined as:
\begin{equation}
    H(\gamma) :=\{h\in X: |h|_{H(\gamma)}<\infty\}.
\end{equation}
\end{definition}

We can now state the Feldman-Hajek theorem
\begin{theorem}\label{thm: feldman hajek}
    Let $X$ be a locally convex space, and $\mu$, $\nu$ be centered Gaussian measures on $X$. Then $\mu\sim\nu$ or $\mu\perp\nu$. Furthermore, $\mu\sim\nu$ if and only if the following conditions hold:
    \begin{enumerate}
        \item The Cameron-Martin spaces of $\mu$ and $\nu$ are the same as sets, i.e., $H(\mu) = H(\nu) = H$.
        \item There exists a nuclear operator $C$ such that $CC^*-I$ is a Hilbert-Schmidt operator on $H$ (has finite Hilbert-Schmidt norm).
    \end{enumerate}
\end{theorem}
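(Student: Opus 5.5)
The statement is the classical Feldman--H\'ajek theorem (cited from \cite[Theorem 2.7.2]{bogachev1998gaussian}). Our plan is to reduce it to a countable product of one-dimensional Gaussians and then apply a Kakutani-type Hellinger-integral argument.

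\emph{Step 1: reduction to a sequence space.} Fix $\mu$ and pick an orthonormal basis $(e_k)$ of $H(\mu)$. Choose the corresponding measurable linear functionals $\xi_k$ in the closure of $\mathbf{X}^*$ in $L^2(\mu)$, so that under $\mu$ the $\xi_k$ are i.i.d.\ $\mathcal{N}(0,1)$. The map $x\mapsto(\xi_k(x))_k$ is then a measurable linear isomorphism (mod null sets) carrying $\mu$ to $\gamma^{\otimes\mathbb{N}}$. We will apply the same map to $\nu$.

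If the $\xi_k$ are not well defined $\nu$-a.s., or if $H(\mu)\neq H(\nu)$, we aim to exhibit singularity directly. Suppose, say, $h\in H(\nu)\setminus H(\mu)$. Then there are $l_n\in\mathbf{X}^*$ with $R_\mu(l_n)(l_n)\le 2^{-n}$ but $R_\nu(l_n)(l_n)=1$. The set $\{\sum_n l_n(x)^2<\infty\}$ then has full $\mu$-measure by Borel--Cantelli, but $\nu$-measure zero. The latter follows because, after passing to a subsequence that is almost orthogonal in $L^2(\nu)$, the $l_n$ behave under $\nu$ like nearly independent standard normals. This proves condition (1) is necessary, with singularity whenever it fails.

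\emph{Step 2: equal Cameron--Martin spaces.} When $H(\mu)=H(\nu)=H$, the closed graph theorem makes the identity between the two Cameron--Martin norms a bounded isomorphism. Hence $R_\nu=C C^*$ relative to $R_\mu$ for some bounded invertible $C$ on $H$, and the Gaussian measure $\nu$ in the coordinates of Step 1 has covariance $T=CC^*$. For finite-dimensional projections $P_n$ onto $\mathrm{span}(e_1,\dots,e_n)$, the Hellinger integral is explicit:
\[
H_n:=\int\sqrt{d\mu_n\,d\nu_n}=\det\bigl(P_nTP_n\bigr)^{1/4}\det\Bigl(\tfrac{I+P_nTP_n}{2}\Bigr)^{-1/2}.
\]
In the eigenbasis of $P_nTP_n$ with eigenvalues $t_i$ this equals $\prod_i\bigl(2\sqrt{t_i}/(1+t_i)\bigr)^{1/2}$.

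The $H_n$ are nonincreasing and converge to the Hellinger integral of $\mu,\nu$. If the limit is $0$, then $\mu\perp\nu$. If the limit is positive, the finite-dimensional densities form a uniformly integrable $\mu$-martingale, as in Kakutani's theorem. Its limit is then a density of the $\nu$-absolutely continuous part, and the Gaussian zero--one law upgrades this to $\nu\ll\mu$. By symmetry we get $\mu\sim\nu$, which yields the dichotomy.

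\emph{Step 3: identifying the condition.} Using $\log\bigl(2\sqrt t/(1+t)\bigr)\asymp -(t-1)^2$ for $t$ in compact subsets of $(0,\infty)$, which applies here since $T$ is bounded and invertible, we get that $\inf_n H_n>0$ if and only if $\sup_n\|P_n(T-I)P_n\|_{HS}<\infty$. This in turn holds if and only if $CC^*-I$ is Hilbert--Schmidt on $H$, which is condition (2).

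\emph{Main obstacle.} We expect the main difficulty to be in Steps 1--2 when $T-I$ is not compact, so that it cannot be diagonalized. The point is to show that the monotone limit of $H_n$ controls singularity without any spectral decomposition. We would first try to handle this purely through monotonicity of Hellinger integrals under projections plus the martingale convergence argument, which avoids diagonalizing $T$. For the singular direction in Step 1, we would fall back on constructing an orthonormal sequence along which the diagonal entries of $T-I$ are not square-summable, and then invoke Kakutani's theorem for product measures.
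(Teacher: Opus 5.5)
The paper gives no proof of this statement; it is quoted from Bogachev's book, so your argument has to stand on its own.

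First, the statement as printed cannot be proved. If $C$ is nuclear, then $CC^*$ is compact, so $CC^*-I$ being Hilbert--Schmidt would force $I$ to be compact. Condition (2) therefore fails whenever $H$ is infinite-dimensional, and $\mu=\nu=$ Wiener measure contradicts the ``only if''. Moreover, $C$ is not tied to $\mu,\nu$. Simply dropping ``nuclear'' makes (2) vacuous (take $C=I$), and the ``if'' direction is then refuted by Wiener measure versus its image under $x\mapsto 2x$.

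What your Steps 2--3 actually prove is the standard form: $C$ is bounded and invertible on $H$, $T=CC^*$ represents the covariance of $\nu$ relative to that of $\mu$, and $T-I$ is Hilbert--Schmidt. State this correction explicitly rather than closing with ``which is condition (2)''.

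The genuine gap is in Step 2, at the passage from $\inf_n H_n>0$ to $\nu\ll\mu$. Kakutani's uniform-integrability argument uses independence of the coordinates under \emph{both} measures. That independence is what makes $\int\sqrt{\rho_n\rho_m}\,d\mu$ a tail product of one-dimensional Hellinger integrals, so that $\sqrt{\rho_n}$ is Cauchy in $L^2(\mu)$. In an arbitrary basis $(e_k)$, the $\xi_k$ are independent under $\nu$ only when $T$ is diagonal in that basis. Without independence, a positive Hellinger integral shows only that $\nu$ is not singular to $\mu$, i.e.\ that the martingale limit $\rho_\infty=d\nu_{ac}/d\mu$ is nonzero. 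The sentence ``the Gaussian zero--one law upgrades this to $\nu\ll\mu$'' is then exactly the dichotomy you are trying to prove. You also name no measurable subspace or invariant set to which a zero--one law would apply.

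The repair is easy, and it also removes your ``main obstacle''. Diagonalization is needed only when $T-I$ is Hilbert--Schmidt. In that case $T-I$ is compact and self-adjoint, so you may take $(e_k)$ to be an eigenbasis of $T$. The $\xi_k$ are then independent, $\mathcal N(0,1)$ under $\mu$ and $\mathcal N(0,t_k)$ under $\nu$, with $\sum_k(t_k-1)^2<\infty$, and Kakutani gives $\mu\sim\nu$. When $T-I$ is not Hilbert--Schmidt, your Step 3 estimate already gives $H_n\to0$ and hence $\mu\perp\nu$, with no spectral decomposition; that part is sound.

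Alternatively, you can keep an arbitrary basis and prove uniform integrability directly. Take $p>1$ with $p/(p-1)>\|T\|$, and let $t_i$ be the eigenvalues of $P_nTP_n$. Then $\int\rho_n^p\,d\mu=\prod_i t_i^{-p/2}\bigl(1+p(t_i^{-1}-1)\bigr)^{-1/2}\le\exp\bigl(c_p\|T-I\|_{\mathrm{HS}}^2\bigr)$, because each factor is $\exp(O((t_i-1)^2))$ uniformly on the spectral bounds of $T$. Either way, the dichotomy comes out of the three-case analysis (unequal Cameron--Martin spaces; $T-I$ not Hilbert--Schmidt; $T-I$ Hilbert--Schmidt), not from a zero--one law.

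A smaller point in Step 1: an almost-orthogonal subsequence of $(l_n)$ in $L^2(\nu)$ need not exist. For example, with $\mu=\mathcal N(0,\mathrm{diag}(1,0))$ and $\nu=\mathcal N(0,I)$ on $\mathbb R^2$, the constraints force $l_n\approx\pm x_2$. You do not need such a subsequence. Since $\{\sum_n l_n^2<\infty\}\subset\liminf_n\{|l_n|<\varepsilon\}$, Fatou's lemma bounds its $\nu$-measure by $\mathbb P(|\mathcal N(0,1)|<\varepsilon)$ for every $\varepsilon>0$, so that measure is zero.
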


\begin{theorem}
    Given signal-observation processes (\ref{eq:sig gaussian}), (\ref{eq:obs corr}), let $P_{XY}$ be their joint path measure. Then there is no measure $\lambda_Y$ on $\ms{Y}$ such that $P_{XY}\ll P_X\otimes \lambda_Y$.
\end{theorem}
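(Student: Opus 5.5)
By Proposition~\ref{prop:abstract}, it suffices to show that $P_{XY}$ is not absolutely continuous with respect to $P_X\otimes P_Y$. In fact we will show $P_{XY}\perp P_X\otimes P_Y$, which is stronger. Because the system is linear with state-independent diffusion coefficients, both $P_{XY}$ and $P_X\otimes P_Y$ are Gaussian on $\mathbf{X}\times\mathbf{Y}=C([0,T],\mathbb{R}^{d+n})$; here we take $X_0$ Gaussian or deterministic and assume $\sigma_1\not\equiv 0$ on a set of positive measure. The two measures share the same mean. After centering, Theorem~\ref{thm: feldman hajek} therefore gives the dichotomy: either they are equivalent, or they are mutually singular. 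We will rule out equivalence. We could do this by showing the Cameron--Martin conditions fail, but it is cleaner to exhibit a single measurable functional whose law differs in a singular way under the two measures.

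The natural functional is the quadratic covariation. Fix partitions $\pi_k$ of $[0,t]$ with mesh tending to zero fast enough, for instance dyadic partitions. For unit vectors $u\in\mathbb{R}^d$ and $v\in\mathbb{R}^n$, set
\[
V_k(x,y):=\sum_{[s,s']\in\pi_k}\langle u,x_{s'}-x_s\rangle\langle v,y_{s'}-y_s\rangle .
\]
Under $P_{XY}$ the drift terms contribute nothing in the limit, so $V_k\to\int_0^t u^\top\sigma_1(s)v\,ds$ almost surely. This is a Lévy-type a.s. convergence for Gaussian increments; along dyadic partitions it follows from a Borel--Cantelli argument, since the variance of $V_k$ minus its mean is $O(2^{-k})$. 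Under $P_X\otimes P_Y$, the paths $x$ and $y$ are independent continuous semimartingales, so their covariation vanishes: $V_k\to 0$ a.s. Choose $t,u,v$ with $\int_0^t u^\top\sigma_1 v\,ds\neq 0$, which is possible whenever $\sigma_1\not\equiv0$. Then
\[
E:=\Bigl\{\lim_k V_k=\int_0^t u^\top\sigma_1(s)v\,ds\Bigr\}
\]
has $P_{XY}(E)=1$ and $(P_X\otimes P_Y)(E)=0$. This gives mutual singularity directly, and it is consistent with the Feldman--H\'ajek dichotomy. Equivalently, in Cameron--Martin language, the operator $CC^*-I$ has a non-decaying off-diagonal block of the form $\sigma_1\sigma_1^\top$-type times the identity, so it is not Hilbert--Schmidt.

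To conclude, $P_{XY}\perp P_X\otimes P_Y$ rules out $P_{XY}\ll P_X\otimes P_Y$, and by Proposition~\ref{prop:abstract} no $\sigma$-finite $\lambda_Y$ can exist. Alternatively, disintegrating $E$ shows that $P_{X|Y}(\cdot,y)\perp P_X$ for $P_Y$-a.e.\ $y$, so Corollary~\ref{cor:abstract} applies directly. The statement says ``measure'' rather than ``$\sigma$-finite measure''. I would either add $\sigma$-finiteness, which is implicit in Assumption~\ref{as: H1}, or remark that the Radon--Nikodym framework requires it.

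The main obstacle is making the almost sure convergence under $P_{XY}$ rigorous, including control of the drift cross-terms and of the $X_0$ component. The drift terms contribute $O(\text{mesh}^{1/2-\epsilon})$ by Hölder continuity. The centered Gaussian fluctuation term needs the variance estimate and Borel--Cantelli. A lesser point is checking that both measures genuinely satisfy the hypotheses of Theorem~\ref{thm: feldman hajek}: they must be Gaussian with equal means, so that centering is legitimate.
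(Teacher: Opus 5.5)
Your argument is correct, but it is not the paper's route for this theorem. The paper stays inside the Gaussian framework. It compares $d\langle (X,Y)\rangle_t$ under the two measures (off-diagonal block $\sigma_1(t)$ versus $0$), asserts that therefore $H(P_{XY})\neq H(P_X\otimes P_Y)$, and concludes singularity from Theorem~\ref{thm: feldman hajek}.

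You instead turn the covariation into an explicit separating set along dyadic partitions. This is essentially the argument the paper reserves for the nonlinear case in the next subsection. It is simpler here because $\sigma_1$ is deterministic, so suitable $t,u,v$ exist as soon as $\sigma_1$ is not a.e.\ zero, and no H\"ormander-type condition is needed.

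Your route buys three things:
\begin{enumerate}
\item It is self-contained. The Feldman--H\'ajek dichotomy in your first paragraph is never used and can be dropped.
\item It does not need Gaussianity, so it survives a nonlinear $h$ or a non-Gaussian $X_0$.
\item It actually proves the non-equivalence that the paper compresses into one line.
\end{enumerate}
That line is also inaccurate as phrased. When $\sigma_0$ is invertible, both Cameron--Martin spaces are the same set of $H^1$-type paths. What fails is the Hilbert--Schmidt condition, because the relevant operator contains a nonzero pointwise multiplication on derivatives, which is not even compact. This is the content of your closing aside, except that the off-diagonal block is the normalized $(\sigma_0\sigma_0^\top+\sigma_1\sigma_1^\top)^{-1/2}\sigma_1$, not a $\sigma_1\sigma_1^\top$-type term. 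The paper's approach is shorter and explains why singularity, rather than mere failure of absolute continuity, is automatic. However, it leaves the key non-equivalence unproved.

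Reducing through Proposition~\ref{prop:abstract} is also more direct than the paper's appeal to Corollary~\ref{cor:abstract}. Both hypotheses you add are genuinely necessary:
\begin{enumerate}
\item If $\sigma_1=0$ a.e., one is back in the Mitter--Newton setting, where Wiener measure works.
\item Without $\sigma$-finiteness, the counting measure on $\mathbf{Y}$ (with the outer-measure product) gives $P_{XY}\ll P_X\otimes\lambda_Y$ trivially, so the statement would be false.
\end{enumerate}
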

\begin{proof}
    By Corollary~\ref{cor:abstract}, it suffices to show that $P_{XY} \perp P_X \otimes P_Y$. The measures $P_{XY}$ and $P_X \otimes P_Y$ are both Gaussian. Under $P_{XY}$, we have:
    \begin{equation}
        d\langle (X,Y)\rangle_t = \begin{pmatrix}
            |\sigma_0(t)|^2+|\sigma_1(t)|^2 & \sigma_1(t)\\
            \sigma_1(t) & 1
        \end{pmatrix}dt.
    \end{equation}
    Under $P_{X}\otimes P_Y$ ($X$ and $Y$ are independent):
    \begin{equation}
        d\langle (X,Y)\rangle_t = \begin{pmatrix}
            |\sigma_0(t)|^2+|\sigma_1(t)|^2 & 0\\
            0 & 1
        \end{pmatrix}dt.
    \end{equation}
    Thus, the Cameron-Martin spaces of the two measures are different $H(P_{XY})\neq H(P_X\otimes P_Y)$, and from Theorem \ref{thm: feldman hajek}, the two measures are singular.
\end{proof}

\subsection{The General Case}

For the general case, we require that $\sigma_0$ is a smooth vector field satisfying the H\"ormander condition, ensuring that $X_t$ admits a density with respect to Lebesgue measure for every $t \in [0,T]$ (see \cite[Section 4.1.2]{geng2021introduction}).

\begin{assumption}\label{as:hordmander's condition}
    The set
    \[
    Z := \{z \in \mathbb R^d: \sigma_1(t,z)=0\},
    \]
    has Lebesgue measure 0 a.e.\ $t\in[0,T]$, and the vector fields $\sigma_0$ satisfy the H\"ormander condition at $X_0$ almost surely.
\end{assumption}

We now verify the hypothesis of Corollary~\ref{cor:abstract} for the full nonlinear model~(\ref{eq:sig corr}), (\ref{eq:obs corr}) under Assumption~\ref{as:hordmander's condition}.

\begin{theorem}
    Given SDEs (\ref{eq:sig corr}), (\ref{eq:obs corr}), with joint measure $P_{XY}$ and Assumption \ref{as:hordmander's condition} is satisfied. Then there is no measure $\lambda_Y$ on $\ms{Y}$ such that $P_{XY}\ll P_X\otimes \lambda_Y$.
\end{theorem}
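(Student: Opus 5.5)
By Corollary~\ref{cor:abstract}, or directly by Proposition~\ref{prop:abstract}, it suffices to show $P_{XY}\perp P_X\otimes P_Y$. Once we have mutual singularity, $P_{XY}\not\ll P_X\otimes P_Y$, and Proposition~\ref{prop:abstract} rules out every $\sigma$-finite $\lambda_Y$. Following the introduction, I would build an explicit set $E\in\mathcal{X}\otimes\mathcal{Y}$ with $P_{XY}(E)=1$ and $(P_X\otimes P_Y)(E)=0$. The natural statistic is the quadratic covariation of $X$ and $Y$, which is the pathwise object that distinguished the two measures in the Gaussian case. Fix dyadic partitions $\pi_k=\{t_i=iT2^{-k}\}$ of $[0,T]$ and define, for paths $(x,y)$, the measurable functionals
\[
V_k(x,y)(t):=\sum_{t_i\le t}(x_{t_{i+1}}-x_{t_i})(y_{t_{i+1}}-y_{t_i})^{\top}.
\]
Set $E:=\{(x,y): V_k(x,y)(t)\to\int_0^t\sigma_1(s,x_s)\,ds \text{ for all rational } t\in[0,T]\}$. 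This set is measurable, since it is defined by countably many limits of continuous functionals, and the integrand depends only on $x$.

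Under $P_{XY}$, $(X,Y)$ is a continuous semimartingale with $d\langle X,Y\rangle_t=\sigma_1(t,X_t)\,dt$. The cross-variation along the refining partitions therefore converges in probability, and after passing to a subsequence $k_j$ (redefining $E$ along it) it converges almost surely for all rational $t$. Hence $P_{XY}(E)=1$.

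Under $P_X\otimes P_Y$, the paths $x$ and $y$ come from independent semimartingales. Realize them on a product space, with $X$ driven by $(B,W)$ and $Y$ by an independent copy $(\tilde B,\tilde W)$. Enlarging the filtration jointly, $X$ and $Y$ remain semimartingales with $\langle X,Y\rangle\equiv 0$, so $V_{k_j}(t)\to0$ almost surely (after perhaps a further subsequence, chosen to work for both measures). On $E$ we would then need $\int_0^t\sigma_1(s,X_s)\,ds=0$ for all rational $t$, and by continuity for all $t$. This gives $\sigma_1(s,X_s)=0$ for a.e. $s$, that is, $X_s\in Z_s$ for a.e. $s$. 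By Fubini,
\[
\mathbb{E}\int_0^T \mathbf 1_{Z_s}(X_s)\,ds=\int_0^T \mathbb{P}(X_s\in Z_s)\,ds=0,
\]
because Assumption~\ref{as:hordmander's condition} (the H\"ormander condition on $\sigma_0$) gives $X_s$ a Lebesgue density for $s>0$, and $Z_s$ is Lebesgue-null for a.e. $s$. So the event $\{X_s\in Z_s \text{ for a.e. } s\}$ is $P_X$-null, and therefore $(P_X\otimes P_Y)(E)=0$.

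The main obstacle is handling the subsequence and measurability issues rigorously. One choice of subsequence must give almost sure convergence under both measures simultaneously; this works by diagonalization, since convergence in probability holds under each. One must also check that the limit in the definition of $E$ uses only $x$ through $\int_0^t\sigma_1(s,x_s)\,ds$, which requires some regularity of $\sigma_1$ (measurability plus local boundedness) so that this functional is $\mathcal{X}$-measurable. A secondary point is the density claim: the cited H\"ormander result gives densities under smoothness of the full coefficient system. I would take Assumption~\ref{as:hordmander's condition} to grant exactly that $\mathrm{Law}(X_s)\ll \mathrm{Leb}$ for a.e. $s$, and state this explicitly.
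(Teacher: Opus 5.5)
Your proposal is correct and follows the same route as the paper: dyadic cross-variation sums converge to $\int_0^t\sigma_1(s,X_s)\,ds$ under $P_{XY}$ and to $0$ under $P_X\otimes P_Y$, and the H\"ormander density together with Fubini shows that $\{x:\sigma_1(s,x_s)=0 \text{ for a.e.\ } s\}$ is $P_X$-null. Your execution is tighter than the paper's in three places. You obtain the zero limit from the vanishing bracket of independent semimartingales in the product filtration, which replaces the paper's mean/variance computation (that computation omits drift and off-diagonal terms). You work along a single common subsequence, which avoids the paper's unjustified upgrade from subsequential to full-sequence a.s.\ convergence. And you use one set indexed by all rational $t$, which avoids the paper's swap of quantifiers over $t$ and $x$.
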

\begin{proof}
    By Corollary~\ref{cor:abstract}, it suffices to show that $P_{XY} \perp P_X\otimes P_Y$. Consider the map $Q_n^{[0,t]}:\mathbf{X}\times\mathbf{Y}\to \mathbb{R}^{d\times n}$ defined as:
    \begin{equation}
        Q_n^{[0,t]}(x,y) = \sum_{i=0}^{2^n-1}(x_{t_{i+1}}-x_{t_i})(y_{t_{i+1}}-y_{t_i})^T,
    \end{equation}
    where the sum is taken over dyadic intervals of $[0,t]\subseteq[0,T]$.
    Under the joint measure $P_{XY}$, we have the quadratic variation:
    \begin{align*}
        \lim_{n\to\infty}Q_n^{[0,t]}(X,Y)  &= \lim_{n\to\infty}\sum_{i=0}^{2^n-1}(X_{t_{i+1}}-X_{t_i})(Y_{t_{i+1}}-Y_{t_i})^T\\
        &= \langle X,Y\rangle_t = \int_0^t\sigma_1(s,X_s)ds,
    \end{align*}
    almost surely.

    Under the product measure $P_X\otimes P_Y$, the mean of $Q_n^{[0,t]}$ under the limit $n\to\infty$ is
    \begin{align*}
        \mathbb{E}[Q_n^{[0,t]}(X,Y)] &= \sum_{i=0}^{2^n-1} \mathbb{E}_X[X_{t_{i+1}}-X_{t_i}]\mathbb{E}_Y[Y_{t_{i+1}}-Y_{t_i}]^T \\
         &= \sum_{i=0}^{2^n-1}\int_{t_i}^{t_{i+1}}\mathbb{E}_X\left[b(s,X_s)\right]ds\times\int_{t_i}^{t_{i+1}}\mathbb{E}_Y\left[h(X_s)\right]^Tds.
    \end{align*}
    Since both $b$ and $h$ have at most linear growth, we have:
    \begin{align*}
        \mathbb{E}[|Q_n^{[0,t]}(X,Y)|] &\leq C\sum_{i=0}^{2^n-1}\int_{t_i}^{t_{i+1}}\mathbb{E}_X[1+|X_s|]ds\times\int_{t_i}^{t_{i+1}}\mathbb{E}_Y[1+|Y_s|]ds\\
        &\leq \tilde{C}\sum_{i=0}^{2^n-1}(t_{i+1}-t_i)^2\to0,
    \end{align*}
    where the constant $\tilde{C} := C\sup_{0\leq s\leq T}\mathbb{E}_X[1+|X_s|]\mathbb{E}_Y[1+|Y_s|]\}$ is finite, since the moments of a strong solution of an It\^o SDE are bounded. Hence $\lim_{n\to\infty}\mathbb{E}[Q_n^{[0,t]}(X,Y)] = 0$ under the product measure.

    The variance under the limit $n\to\infty$ is
    \begin{align*}
        \text{Var}(Q_n^{[0,t]}) &= \sum_{i=0}^{2^n-1} \mathbb{E}_X[(X_{t_{i+1}}-X_{t_i})^2]\mathbb{E}_Y[(Y_{t_{i+1}}-Y_{t_i})^2] \\
        &= \sum_{i=0}^{2^n-1}\int_{t_i}^{t_{i+1}}\mathbb{E}_X\left[(|\sigma_0(s,X_s)|^2+|\sigma_1(s,X_s)|^2)\right]ds\times\int_{t_i}^{t_{i+1}}ds.
    \end{align*}
    Since the maps $\sigma_0$ and $\sigma_1$ have at most linear growth,
    \begin{align*}
        \text{Var}(Q_n^{[0,t]}) &\leq C'\sum_{i=0}^{2^n-1}\int_{t_i}^{t_{i+1}}\mathbb{E}_X[(1+|X_s|)^2]ds\times (t_{i+1}-t_i)\\
        &\leq \tilde{C}'\sum_{i=0}^{2^n-1}(t_{i+1}-t_i)^2\to 0,
    \end{align*}
    where $\tilde{C}' := C'\sup_{0\leq s\leq T}\mathbb{E}_X[(1+|X_s|)^2]$ is bounded due to bounded moments of a strong solution to an It\^o SDE.

    Thus, we have $Q_n^{[0,t]}\to 0$ in $L^2$ as $n\to\infty$. From Markov's inequality, it follows that $Q_n^{[0,t]}(X,Y)\to0$ in probability as $n\to\infty$. In order to show convergence almost surely, we choose an increasing subsequence $\{n_k\}_{k\in \mathbb{N}}$ such that
    \begin{equation*}
        \mathbb{P}(|Q^{[0,t]}_{n_k}(X,Y)|>1/k) \leq \frac{1}{2^k}.
    \end{equation*}
    Since
    \[
        \sum_{k\in\mathbb{N}}\mathbb{P}(|Q^{[0,t]}_{n_k}(X,Y)|>1/k)  \leq 1,
    \]
    from the Borel-Cantelli lemma,
    \begin{equation*}
        \mathbb{P}(\limsup_{k\to\infty}\{|Q^{[0,t]}_{n_k}(X,Y)|>1/k\}) = 0,
    \end{equation*}
    i.e., the event $\{|Q^{[0,t]}_{n_k}(X,Y)|>1/k\}$ occurs only finitely many times almost surely.
    Therefore, for almost every $\omega\in\Omega$, there exists $K(\omega)$ such that for all $l>K(\omega)$,
    \[
        |Q^{[0,t]}_{n_l}(X,Y)|\leq\frac{1}{l}.
    \]
    Since the sequence $\{n_k\}$ is increasing, we have $Q_n^{[0,t]}(X,Y) \to 0$ almost surely as $n\to\infty$.
    Define the sets
    \begin{align*}
        A^{[0,t]} &= \left\{(x,y)\in\mathbf{X\times Y}:\lim_{n\to\infty}Q_n^{[0,t]}(x,y) = \int_0^t\sigma_1(s,x_s)ds\right\},\\
        B^{[0,t]} &= \{(x,y)\in\mathbf{X\times Y}:\lim_{n\to\infty}Q_n^{[0,t]}(x,y) =0\}.
    \end{align*}
    We have $P_{XY}(A^{[0,t]}) = 1$ and $(P_X\otimes P_Y)(B^{[0,t]}) =1$ for all $t\in [0,T]$. In order to show the mutual singularity of the two measures, we need to show that for some $s\in[0,T]$, $(P_X\otimes P_Y)(A^{[0,s]}) = 0$. For all $t\in[0,T]$, we have
    \begin{align*}
        (P_X\otimes P_Y)(A^{[0,t]}) &= (P_X\otimes P_Y)(A^{[0,t]}\cap B^{[0,t]}) + (P_X\otimes P_Y)(A^{[0,t]}\cap (B^{[0,t]})^c).
    \end{align*}
    Since $(P_X\otimes P_Y)((B^{[0,t]})^c) = 0$, $(P_X\otimes P_Y)(A^{[0,t]}\cap (B^{[0,t]})^c) = 0$. The set $A^{[0,t]}\cap B^{[0,t]}$ is given by:
    \begin{equation*}
        A^{[0,t]}\cap B^{[0,t]} = \left\{(x,y)\in\mathbf{X\times Y}:\int_0^t\sigma_1(s,x_s)ds = 0\right\}.
    \end{equation*}
    If there exists a $t\in[0,T]$ such that $A^{[0,t]}\cap B^{[0,t]} = \emptyset$, then we have $(P_X\otimes P_Y)(A^{[0,t]}) = (P_X\otimes P_Y)(A^{[0,t]}\cap B^{[0,t]}) = 0$, we are done. If there is no $t\in[0,T]$ such that $A^{[0,t]}\cap B^{[0,t]} = \emptyset$, then there exists $x\in\mathbf{X}$ such that
    \[
        \int_0^t\sigma_1(s,x_s)ds = 0,
    \]
    for all $t\in[0,T]$. This means that $\sigma_1(t,x_t) = 0$ for almost every $t\in[0,T]$. We now show that the product measure $P_X\otimes P_Y$ assigns no mass to the set
    \[
        S := \{(x,y)\in\mathbf{X\times Y}: \sigma_1(t,x_t) = 0 \text{ a.e.\ } t\in[0,T]\}.
    \]
    It is enough to show that $P_X(\{x:\sigma_1(t,x_t) = 0 \text{ a.e.\ } t\in[0,T]\}) = 0$. From Assumption \ref{as:hordmander's condition}, $X_t$ admits a density with respect to the Lebesgue measure. Since, $Z = \{z\in\mathbb{R}^d:\sigma_1(t,z) = 0\}$ has Lebesgue measure 0 a.e.\ $t\in[0,T]$, we have
    \begin{align*}
\mathbb{E}_X\left[\int_0^T\mathbf{1}_{\{\sigma_1(t,X_t)=0\}}dt\right] &= \int_0^TP_X(\{x:\sigma_1(t,x_t)=0\})dt\\ &=0.
    \end{align*}
    Thus, $P_X(\{x:\sigma_1(t,x_t)=0\}) = 0$ a.e.\ $t\in[0,T]$, from which we have that $P_X \otimes P_Y$ assigns no mass to the set $S$. Therefore, $P_{XY}$ is not absolutely continuous with respect to $P_X \otimes P_Y$.
\end{proof}

\section{A Variational Formulation for Correlated Noise}
The results of the preceding sections show that Assumption~2.1 cannot be satisfied when the signal and observation share a common Brownian motion, and moreover that the filtering measure $P_{X|Y}(\cdot, y)$ is singular with respect to the prior $P_X$. This {\it rules out} any Gibbs representation of the form $dP_{X|Y} \propto e^{-H} \, dP_X$. In this section, we show that a variational characterization of the filter can be recovered by replacing the prior with a \emph{conditional reference measure} that preserves the noise correlation structure. We first present an abstract variational principle, paralleling Proposition~\ref{prop:abstract}.

\subsection{Conditional Variational Formula}
Let $(\mathcal{X}, \mathcal{X})$ and $(\mathcal{Y}, \mathcal{Y})$ be standard Borel spaces, and let $P_{XY}$ be a probability measure on $\mathcal{X} \times \mathcal{Y}$ with marginals $P_X$ and $P_Y$.

\begin{assumption}\label{ass:ref}
    There exists a probability measure $Q$ on $\mathcal{X} \times \mathcal{Y}$ with marginal $Q_Y = P_Y$ such that, for $P_Y$-a.e.\ $y \in \mathcal{Y}$,
    \[
        P_{X|Y}(\cdot, y) \ll Q_{X|Y}(\cdot, y).
    \]
\end{assumption}

\begin{remark}
    Assumption~\ref{ass:ref} replaces Assumption~2.1 of Mitter and Newton. It requires absolute continuity of the \emph{conditional} measures rather than of the joint measure with respect to a product. The condition $Q_Y = P_Y$ is a normalization that ensures the two models agree on the observation marginal.
\end{remark}

\begin{definition}\label{def:energy}
    Given $Q$ satisfying Assumption~\ref{ass:ref}, define the energy $H : \mathcal{X} \times \mathcal{Y} \to (-\infty, \infty]$ by
    \[
        H(x, y) := -\log \frac{dP_{X|Y}(\cdot, y)}{dQ_{X|Y}(\cdot, y)}(x),
    \]
    and the normalizing constant
    \[
        Z(y) := \int_{\mathcal{X}} \exp\bigl(-H(x, y)\bigr) \, Q_{X|Y}(dx, y).
    \]
    Define
    \begin{align*}
        \bar{\mathcal{Y}} &:= \left\{ y \in \mathcal{Y} : 0 < Z(y) < \infty \text{ and } \int_{\mathcal{X}} |H(x,y)| \exp\bigl(-H(x,y)\bigr) \, Q_{X|Y}(dx, y) < \infty \right\}.
    \end{align*}
\end{definition}

\begin{proposition}\label{prop:gibbs}
    Under Assumption~\ref{ass:ref}, $P_Y(\bar{\mathcal{Y}}) = 1$, and for every $y \in \bar{\mathcal{Y}}$, the filtering measure $P_{X|Y}(\cdot, y)$ is a Gibbs measure with energy $H(\cdot, y)$ and reference $Q_{X|Y}(\cdot, y)$:
    \[
        P_{X|Y}(A, y) = \frac{\int_A \exp\bigl(-H(x, y)\bigr) \, Q_{X|Y}(dx, y)}{\int_{\mathcal{X}} \exp\bigl(-H(x, y)\bigr) \, Q_{X|Y}(dx, y)}, \quad A \in \mathcal{X}.
    \]
\end{proposition}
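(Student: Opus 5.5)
By Assumption~\ref{ass:ref}, for $P_Y$-a.e.\ $y$ the Radon--Nikodym derivative
\[
\rho(\cdot,y) := \frac{dP_{X|Y}(\cdot,y)}{dQ_{X|Y}(\cdot,y)}
\]
exists. We first choose a jointly measurable version of it. Since $\mathcal{X}$ is standard Borel, the $\sigma$-algebra is countably generated. So we will take $\rho$ as a limit of ratios over a refining sequence of finite partitions (a martingale/Doob-type construction), which yields a $\mathcal{X}\otimes\mathcal{Y}$-measurable function $\rho$. Then $H=-\log\rho$ is measurable with values in $(-\infty,\infty]$, and $e^{-H}=\rho$. This also ensures that $Z(y)$ and the integral in the definition of $\bar{\mathcal{Y}}$ are measurable in $y$, so $\bar{\mathcal{Y}}\in\mathcal{Y}$. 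On the null set where absolute continuity fails, we define $H$ arbitrarily (say $H=0$), exactly as in Section 2.

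Next we check that $P_Y(\bar{\mathcal{Y}})=1$. For $P_Y$-a.e.\ $y$,
\[
Z(y)=\int \rho(x,y)\,Q_{X|Y}(dx,y)=P_{X|Y}(\mathbf{X},y)=1,
\]
so $0<Z<\infty$. The remaining condition reads
\[
\int |\log\rho|\,\rho\,dQ_{X|Y}=\int |\log \rho|\,dP_{X|Y}<\infty.
\]
The negative part is harmless: since $t\,|\log t|\le e^{-1}$ on $[0,1]$, the contribution from $\{\rho\le1\}$ is at most $e^{-1}$. The positive part is $\int_{\{\rho>1\}}\rho\log\rho\,dQ_{X|Y}$, which is finite precisely when the relative entropy $h(P_{X|Y}(\cdot,y)\,|\,Q_{X|Y}(\cdot,y))$ is finite.

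This is the main obstacle. Absolute continuity alone does not give finite entropy, so the claim $P_Y(\bar{\mathcal{Y}})=1$ needs an extra ingredient. We would first try to obtain it from integrability of the joint density, for instance by requiring or proving $P_{XY}\ll Q$ with
\[
\int \log\frac{dP_{XY}}{dQ}\,dP_{XY}<\infty.
\]
By the chain rule and $Q_Y=P_Y$, this quantity equals $\int h(P_{X|Y}(\cdot,y)\,|\,Q_{X|Y}(\cdot,y))\,P_Y(dy)$, which forces finiteness for $P_Y$-a.e.\ $y$. Failing that, we would weaken the statement so that $\bar{\mathcal{Y}}$ only requires $0<Z<\infty$, and impose the entropy condition pointwise, as Proposition~\ref{prop:gibbs var} does.

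Finally, the Gibbs representation. For $y\in\bar{\mathcal{Y}}$ outside the exceptional null set and any $A\in\mathcal{X}$,
\[
\int_A e^{-H}\,dQ_{X|Y}(\cdot,y)=\int_A\rho\,dQ_{X|Y}(\cdot,y)=P_{X|Y}(A,y),
\]
and dividing by $Z(y)=1$ gives the stated formula. To make the identity hold for every $y\in\bar{\mathcal{Y}}$ rather than almost every, we can either redefine the regular conditional distribution on the null set or shrink $\bar{\mathcal{Y}}$ by a $P_Y$-null set. Either choice leaves $P_Y(\bar{\mathcal{Y}})=1$ intact.
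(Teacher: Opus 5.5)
Your proposal is sound, and at the one point where it departs from the paper, it is the paper's argument that has the gap. Your treatment of $Z(y)=1$ and of the Gibbs identity is the same as the paper's: both are immediate once $e^{-H(\cdot,y)}=dP_{X|Y}(\cdot,y)/dQ_{X|Y}(\cdot,y)$. Your extra steps are bookkeeping the paper omits but needs. You take a jointly measurable version of $\rho$, so that $\bar{\mathcal{Y}}\in\mathcal{Y}$. You also discard the exceptional $P_Y$-null set; there an arbitrary choice such as $H=0$ would place $y$ in $\bar{\mathcal{Y}}$ while the formula returns $Q_{X|Y}(\cdot,y)$ instead of $P_{X|Y}(\cdot,y)$.

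For $P_Y(\bar{\mathcal{Y}})=1$, the paper simply invokes ``the finiteness of $h(P_{X|Y}(\cdot,y)\,\|\,Q_{X|Y}(\cdot,y))$'' for $P_Y$-a.e.\ $y$ without deriving it. You are right that this does not follow from Assumption~\ref{ass:ref}, and the claim is false as stated. Take the observation space to be a single point (so $Q_Y=P_Y$ trivially) and $Q_{X|Y}$ to be Lebesgue measure on $[0,1]$. Let $P_{X|Y}$ have density $\rho(x)=x^{-1}(\log x)^{-2}\mathbf{1}_{(0,1/e)}(x)$. With $u=-\log x$ one gets $\int\rho\,dx=\int_1^\infty u^{-2}\,du=1$ but $\int\rho\log\rho\,dx=\int_1^\infty(u-2\log u)\,u^{-2}\,du=\infty$. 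So Assumption~\ref{ass:ref} holds while $\bar{\mathcal{Y}}=\emptyset$.

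Your repair is the right one, and slightly less needs to be assumed than you suggest, because $P_{XY}\ll Q$ is automatic. Let $E^y$ denote the $y$-section of $E$. If $Q(E)=0$, disintegrating against $Q_Y=P_Y$ gives $Q_{X|Y}(E^y,y)=0$, hence $P_{X|Y}(E^y,y)=0$, for $P_Y$-a.e.\ $y$. So $P_{XY}(E)=0$, and $dP_{XY}/dQ=\rho$.

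Thus the only new hypothesis is $h(P_{XY}\,\|\,Q)<\infty$. By the chain rule and $Q_Y=P_Y$, this equals $\int h(P_{X|Y}(\cdot,y)\,\|\,Q_{X|Y}(\cdot,y))\,P_Y(dy)$, which forces a.e.\ finiteness. The minimal alternative is to assume a.e.\ finiteness of these conditional entropies directly.

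This is not cosmetic. Without it, Theorem~\ref{thm:var} can be vacuous on a set of positive $P_Y$-measure, since at $\tilde P=P_{X|Y}(\cdot,y)$ the entropy term is $+\infty$ and the energy term is $-\infty$.
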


\begin{proof}
    This is immediate from the definition of $H$. By Assumption~\ref{ass:ref}, the Radon-Nikodym derivative exists, and
    \[
        \exp\bigl(-H(x, y)\bigr) = \frac{dP_{X|Y}(\cdot, y)}{dQ_{X|Y}(\cdot, y)}(x).
    \]
    Since $P_{X|Y}(\cdot, y)$ is a probability measure, $Z(y) = 1$ for $P_Y$-a.e.\ $y$. The integrability condition defining $\bar{\mathcal{Y}}$ holds $P_Y$-a.e.\ because $H \exp(-H)$ is integrable under $P_{X|Y}(\cdot, y)$ whenever $H$ has finite entropy under the posterior, which holds $P_Y$-a.e.\ by the finiteness of $h(P_{X|Y}(\cdot, y) \| Q_{X|Y}(\cdot, y))$.
\end{proof}

\begin{theorem}\label{thm:var}
    Under Assumption~\ref{ass:ref}, for every $y \in \bar{\mathcal{Y}}$, the conditional measure $P_{X|Y}(\cdot, y)$ is the unique minimizer of
    \begin{equation}\label{eq:var}
        \min_{\tilde{P} \in \mathcal{P}(\mathcal{X})} \left\{ h\!\left(\tilde{P} \,\middle\|\, Q_{X|Y}(\cdot, y)\right) + \left\langle H(\cdot, y),\, \tilde{P} \right\rangle \right\}.
    \end{equation}
\end{theorem}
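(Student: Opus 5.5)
The plan is to reduce Theorem~\ref{thm:var} to the Gibbs variational principle (the Donsker--Varadhan duality), exactly as Proposition~\ref{prop:gibbs var} was obtained in the Mitter--Newton setting. The only change is that the reference measure is now $Q_{X|Y}(\cdot,y)$ instead of $P_X$. Fix $y\in\bar{\mathcal{Y}}$ and write $R:=Q_{X|Y}(\cdot,y)$, $H:=H(\cdot,y)$ and $P^*:=P_{X|Y}(\cdot,y)$. By Proposition~\ref{prop:gibbs} we have $dP^* = Z(y)^{-1}e^{-H}\,dR$, with $0<Z(y)<\infty$; for $P_Y$-a.e.\ $y$ in fact $Z(y)=1$.

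The key identity I will establish is that, for every $\tilde P\in\mathcal{P}(\mathcal{X})$,
\[
h(\tilde P\,\|\,R) + \langle H,\tilde P\rangle = h(\tilde P\,\|\,P^*) - \log Z(y).
\]
I would prove it by cases.

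\emph{Case 1: $\tilde P\not\ll R$.} Then $h(\tilde P\|R)=+\infty$. Since $P^*\ll R$, we also have $\tilde P\not\ll P^*$, so both sides equal $+\infty$. This uses that $H>-\infty$ except on an $R$-null set, which holds because $Z(y)<\infty$ forces $e^{-H}<\infty$ $R$-a.e.

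\emph{Case 2: $\tilde P\ll R$.} Write $f=d\tilde P/dR$. On the set $\{H=+\infty\}$, which is $P^*$-null, either $\tilde P$ charges it, in which case both sides are $+\infty$, or it does not. Otherwise $d\tilde P/dP^* = f\,Z(y)e^{H}$ $\tilde P$-a.e., and formally
\[
\log\frac{d\tilde P}{dP^*} = \log f + H + \log Z(y).
\]
Integrating against $\tilde P$ gives the identity.

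Uniqueness and minimality then follow at once. Relative entropy satisfies $h(\tilde P\|P^*)\ge 0$, with equality iff $\tilde P=P^*$. Hence the minimum value is $-\log Z(y)$, which equals $0$ a.e., and it is attained exactly at $\tilde P=P^*$. I would also check directly that $P^*$ gives a finite objective: $\langle H,P^*\rangle = Z(y)^{-1}\int H e^{-H}\,dR$ is finite by the second condition defining $\bar{\mathcal{Y}}$, so the objective is not $\infty-\infty$ there.

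The main obstacle is the integration step in Case 2, since it splits $\int \log(d\tilde P/dP^*)\,d\tilde P$ into $\int\log f\,d\tilde P + \int H\,d\tilde P$ when individual terms may be infinite or undefined. The convention $\langle H,\tilde P\rangle=+\infty$ when the integral does not exist must be reconciled with this. I would proceed as follows.
\begin{enumerate}
\item Note that $H$ is bounded below by $-\log$ of something integrable in a useful sense. Concretely, $\int H^- \,d\tilde P$ can be controlled via the Young/Fenchel inequality $ab\le e^{a}+b\log b - b$ applied to $a=H^-$ and $b=f$. The resulting bound is $\int H^-\,d\tilde P \le \int e^{H^-}\,dR + h(\tilde P\|R)$, roughly. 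If this fails, I would truncate $H$ at level $-M$ and pass to the limit by monotone convergence.
\item With the negative part controlled, whenever $h(\tilde P\|R)<\infty$ the sum on the left is well defined in $(-\infty,\infty]$, and the splitting is legitimate. The symmetric argument, using $h(\tilde P\|P^*)<\infty$ and $\log f = \log(d\tilde P/dP^*) - H - \log Z$, handles the remaining subcase.
\end{enumerate}
Once this bookkeeping is done, the theorem follows directly from nonnegativity of relative entropy.
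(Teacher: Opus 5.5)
Your route is the paper's. The paper proves Theorem~\ref{thm:var} in one line, Proposition~\ref{prop:gibbs} plus the Gibbs variational principle, and you are unpacking that principle. The gap is that your key identity $h(\tilde P\,\|\,R)+\langle H,\tilde P\rangle=h(\tilde P\,\|\,P^*)-\log Z(y)$ is false as stated ``for every $\tilde P$''. The ``symmetric argument'' in your item~2 cannot establish it. Young's inequality against $P^*$ (with $a=H^+$, $b=d\tilde P/dP^*$, and $\int e^{H^+}\,dP^*\le Z^{-1}\int e^{H^-}\,dR<\infty$) controls only $\int H^+\,d\tilde P$ when $h(\tilde P\,\|\,P^*)<\infty$. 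Nothing controls $\int H^-\,d\tilde P$: the dual bound would need $\int e^{H^-}\,dP^*<\infty$, i.e.\ $\int_{\{H<0\}}e^{-2H}\,dR<\infty$, and membership in $\bar{\mathcal Y}$ does not guarantee this. Here is a concrete failure on $\mathcal X=\mathbb N$. Let $p_n\propto 1/(n(\log n)^3)$ and $r_n=p_n/n$ for $n\ge 2$, and put the leftover mass of $R$ at $n=1$, where $p_1=0$. Then $H(n)=-\log n$, $Z=1$, and $\int|H|e^{-H}\,dR=\sum p_n\log n<\infty$, so this $y$ lies in $\bar{\mathcal Y}$. Now take $q_n\propto 1/(n(\log n)^2)$. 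It has $h(\tilde P\,\|\,P^*)<\infty$, yet $\int H\,d\tilde P=-\infty$ and $h(\tilde P\,\|\,R)=+\infty$. So your left side is $+\infty$ (or $\infty-\infty$) while the right side is finite.

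You do not need equality in that subcase; the inequality $h(\tilde P\,\|\,R)+\langle H,\tilde P\rangle\ge h(\tilde P\,\|\,P^*)-\log Z(y)$ for all $\tilde P$, with equality at $\tilde P=P^*$, suffices. Your item~1 (Young's inequality against $R$, giving $\int H^-\,d\tilde P\le Z+h(\tilde P\,\|\,R)$) correctly yields equality whenever $h(\tilde P\,\|\,R)<\infty$. This case includes $\tilde P=P^*$, by the second condition in $\bar{\mathcal Y}$. When $h(\tilde P\,\|\,R)=+\infty$, which includes your Case~1, the objective is simply $+\infty$. That holds provided you fix the convention that $\langle H,\tilde P\rangle:=+\infty$ unless $\int H^-\,d\tilde P<\infty$. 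This convention is what rules out $\infty-\infty$. Your remark that $H>-\infty$ off an $R$-null set does not do this: $H$ is $(-\infty,\infty]$-valued anyway, and $R$-a.e.\ facts say nothing about a $\tilde P\not\ll R$. With the inequality in place of the identity and the convention made explicit, minimality and uniqueness follow exactly as you conclude.
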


\begin{proof}
    By Proposition~\ref{prop:gibbs}, $P_{X|Y}(\cdot, y)$ is a Gibbs measure with energy $H(\cdot, y)$ and reference $Q_{X|Y}(\cdot, y)$. The result follows from the Gibbs variational principle.
\end{proof}

\begin{remark}[Reduction to Mitter--Newton]
    When $P_{X|Y}(\cdot, y) \ll P_X$ for $P_Y$-a.e.\ $y$ (as holds when signal and observation are driven by independent noises) one may take $Q = P_X \otimes P_Y$, so that $Q_{X|Y}(\cdot, y) = P_X$ for all $y$. Then Assumption~\ref{ass:ref} reduces to Assumption~2.1 with $\lambda_Y = P_Y$, the energy recovers the Mitter--Newton energy, and Theorem~\ref{thm:var} reduces to Proposition~2.4.
\end{remark}

\begin{remark}[Role of the reference joint law $Q$]
    The choice of $Q$ determines both the reference measure $Q_{X|Y}(\cdot, y)$ and the energy $H$. In the abstract setting, $Q$ need only satisfy Assumption~\ref{ass:ref}; any such $Q$ yields a valid variational characterization. The formulation acquires specific content when $Q$ is chosen to have a natural interpretation. For instance, in the linear diffusion model with correlated noise, the canonical choice is the driftless system that preserves the noise coupling (Section~\ref{sec:vfd} below), giving $Q_{X|Y}(\cdot, y)$ a concrete probabilistic meaning as the signal dynamics with shared noise frozen to $y$ and all drifts removed.
\end{remark}

\subsection{A Variational Formulation for Linear Diffusions with Correlated Noise}\label{sec:vfd}
We restrict to the linear signal-observation setting for clarity:
\begin{align*}
    dX_t &= AX_t \, dt + \sigma_0 \, dB_t + \sigma_1 \, dW_t, \quad X_0 = x_0 \in \mathbb{R}^d, \\
    dY_t &= CX_t \, dt + dW_t,
\end{align*}
where $A \in \mathbb{R}^{d \times d}$, $C \in \mathbb{R}^{n \times d}$, $\sigma_0 \in \mathbb{R}^{d \times d}$, $\sigma_1 \in \mathbb{R}^{d \times n}$, $B$ is a $d$-dimensional Brownian motion, $W$ is an $n$-dimensional Brownian motion independent of $B$, and $x_0$ is deterministic. We assume $\sigma_0$ is invertible.

 Define the \emph{driftless reference system} by removing all drift terms while preserving the noise coupling:
\begin{align*}
    dX_t^0 &= \sigma_0 \, dB_t + \sigma_1 \, dW_t, \\
    dY_t^0 &= dW_t.
\end{align*}
Let $Q$ denote the joint law of $(X^0, Y^0)$ on $\mathcal{X} \times \mathcal{Y}$.

\begin{definition}\label{def:cond_ref}
    For each $y \in \mathcal{Y}$, define the \emph{conditional reference measure} $\mu_y(\cdot) := Q_{X|Y}(\cdot, y)$. Under $\mu_y$, the signal process satisfies
    \begin{equation}\label{eq:cond_ref}
        X_t = x_0 + \sigma_0 B_t + \sigma_1 y_t,
    \end{equation}
    where $B$ is a $d$-dimensional Brownian motion. In particular, $\mu_y$ is a Gaussian measure on $\mathcal{X}$ with mean $m_t^0(y) := x_0 + \sigma_1 y_t$ and covariance $\mathrm{Cov}(X_s, X_t) = |\sigma_0|^2 \min(s,t)$.
\end{definition}

\subsubsection{Absolute Continuity and the Free Energy}
Under the true law $P$, conditioning on $Y = y$ constrains $W_t = y_t - C\int_0^t X_s \, ds$, so the signal satisfies
\begin{equation}\label{eq:cond_true}
    dX_t = (AX_t - \sigma_1 CX_t) \, dt + \sigma_0 \, dB_t + \sigma_1 \, dy_t.
\end{equation}
The difference between \eqref{eq:cond_true} and \eqref{eq:cond_ref} is precisely the drift term $\beta(X_t) = (A-\sigma_1 C) X_t$.

\begin{proposition}\label{prop:ac}
    For $P_Y$-a.e.\ $y \in \mathcal{Y}$, $P_{X|Y}(\cdot, y) \ll \mu_y$, and the Radon-Nikodym derivative is given by
    \begin{equation}\label{eq:RN}
        \frac{dP_{X|Y}(\cdot, y)}{d\mu_y}(x) = \frac{1}{Z(y)} \exp\!\left(-H(x, y)\right),
    \end{equation}
    where
    \begin{align}\label{eq:energy}
        H(x, y) &:= -\int_0^T \beta(x_t)^T (\sigma_0 \sigma_0^T)^{-1} \bigl(dx_t - \sigma_1 \, dy_t\bigr) + \frac{1}{2} \int_0^T \beta(x_t)^T (\sigma_0 \sigma_0^T)^{-1} \beta(x_t) \, dt,
    \end{align}
    and $Z(y) := \int_{\mathcal{X}} \exp(-H(x,y)) \, \mu_y(dx)$ is the normalizing constant.
\end{proposition}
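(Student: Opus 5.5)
The natural route is Girsanov's theorem applied pathwise, for fixed $y$, to the conditional dynamics. First I would identify the conditional law $P_{X|Y}(\cdot,y)$ rigorously. Under $P$ the pair $(B,W)$ is independent, and $W$ is a measurable functional of $(X,Y)$: $W_t = Y_t - C\int_0^t X_s\,ds$. Substituting this into the signal equation shows that $(X,Y)$ solves
\[
dX_t = (A-\sigma_1 C)X_t\,dt + \sigma_0\,dB_t + \sigma_1\,dY_t .
\]
Since the equation is linear with constant coefficients, the solution can be written as a continuous functional of $(B,Y)$, via the variation-of-constants formula with the $dY$ integral handled by integration by parts. The obstruction is that $B$ is not independent of $Y$ under $P$, because $Y$ depends on $X$, which depends on $B$. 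To get around this, I would use the same device as Section~2. Under the measure $\tilde P$ with $d\tilde P/dP = \exp\bigl(-\int_0^T (CX_t)^T dW_t - \tfrac12\int_0^T|CX_t|^2dt\bigr)$, $Y$ is a Brownian motion independent of $B$. This uses Novikov's condition, which holds for linear Gaussian $X$ on short intervals; for general $T$ I would localize by concatenation. Under $\tilde P$, the conditional law of $X$ given $Y=y$ is the law $\nu_y$ of the solution of the displayed SDE with $dY$ replaced by the frozen path $dy$. The Kallianpur--Striebel formula then gives $P_{X|Y}(\cdot,y)\ll \nu_y$ for $P_Y$-a.e.\ $y$, with density proportional to the Girsanov factor $\exp\bigl(\int (Cx_t)^T dy_t - \tfrac12\int|Cx_t|^2dt\bigr)$ written in robust form.

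Next I would compare $\nu_y$ with $\mu_y$. Both are laws of processes of the form $x_0+\sigma_1 y_t + (\text{a process driven by }\sigma_0 B)$. Writing $V_t := X_t - \sigma_1 y_t$ under $\nu_y$, we get $dV_t = \beta(V_t+\sigma_1 y_t)\,dt + \sigma_0\,dB_t$, whereas under $\mu_y$ we have $V = x_0+\sigma_0 B$. Since $\sigma_0$ is invertible and the drift is linear in $V$, with $y$ continuous, Girsanov's theorem gives equivalence. Beneš's condition, or linear growth together with a Gaussian moment bound on small time intervals, justifies that the exponential is a true martingale. The resulting density is
\[
\exp\Bigl(\int_0^T \beta(x_t)^T(\sigma_0\sigma_0^T)^{-1}(dx_t-\sigma_1 dy_t) - \tfrac12\int_0^T \beta^T(\sigma_0\sigma_0^T)^{-1}\beta\,dt\Bigr),
\]
where $dx_t-\sigma_1dy_t = dv_t$ is a $\mu_y$-semimartingale differential. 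This is exactly $e^{-H(x,y)}$ with $H$ as in \eqref{eq:energy}. Chaining the two densities yields $P_{X|Y}(\cdot,y)\ll\mu_y$.

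Finally, the two densities have to be reconciled so that the result has the single form \eqref{eq:RN}, with $Z(y)$ given by normalization. The Kallianpur--Striebel factor involving $C$ must either be absorbed or shown to cancel. Here I expect the main obstacle. The paper's formula attributes the entire $x$-dependence to $\beta=(A-\sigma_1C)x$, which suggests writing the conditional dynamics directly under $P$ given $y$, as in \eqref{eq:cond_true}, and applying Girsanov once against $\mu_y$. My first attempt would therefore be to verify, via the Kallianpur--Striebel computation, that $\nu_y$ reweighted by the likelihood factor coincides with the law solving \eqref{eq:cond_true}. If this fails, I would keep the extra factor $\exp(\int (Cx)^Tdy - \tfrac12\int|Cx|^2dt)$ and fold it into $H$, noting that the $x$-independent parts are absorbed by $Z(y)$.

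A secondary technical point is that the stochastic integral $\int\beta(x)^T(\ldots)dx$ is defined only $\mu_y$-a.s. This is harmless, since the statement concerns densities, and the integral can be made robust by integration by parts because $\beta$ is linear.
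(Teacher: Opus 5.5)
Your chain of arguments is sound, and the obstruction you flag (under $P$, $B$ is not independent of $Y$) is real. It is also exactly why the step you plan as your ``first attempt'' fails, and why the proposal cannot end at \eqref{eq:energy}.

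The law of the solution of \eqref{eq:cond_true}, with $B$ a Brownian motion and $y$ frozen, \emph{is} your $\nu_y$. So checking that $\nu_y$ reweighted by $L(x,y)=\exp\bigl(\int_0^T (Cx_t)^T dy_t-\tfrac12\int_0^T|Cx_t|^2dt\bigr)$ coincides with it amounts to showing that $L(\cdot,y)$ is $\nu_y$-a.s.\ constant. That is false as soon as $C\neq 0$, and since $L$ depends on $x$, it cannot be absorbed into $Z(y)$ either. Your argument (Girsanov to $\tilde P$, then Kallianpur--Striebel, then Girsanov from $\nu_y$ to $\mu_y$) actually proves
\[
\frac{dP_{X|Y}(\cdot,y)}{d\mu_y}(x)\propto\exp\Bigl(-H(x,y)+\int_0^T (Cx_t)^T dy_t-\tfrac12\int_0^T|Cx_t|^2\,dt\Bigr).
\]
So $P_{X|Y}(\cdot,y)\ll\mu_y$ does hold, but the energy is $H(x,y)-\int_0^T (Cx_t)^Tdy_t+\tfrac12\int_0^T|Cx_t|^2dt$, not \eqref{eq:energy}.

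This is not a defect of your route: the formula in the statement is false. The paper's proof reaches it only by treating \eqref{eq:cond_true} as the conditional dynamics with $B$ still a Brownian motion after conditioning on $Y=y$. That is precisely the independence you observed fails. It then applies Girsanov once, which silently drops $L$.

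A concrete counterexample: take $d=n=1$, $x_0=0$, $A=C=\sigma_0=\sigma_1=1$.
\begin{itemize}
\item Then $\beta\equiv0$ and $H\equiv0$, so the statement asserts $P_{X|Y}(\cdot,y)=\mu_y=\mathrm{Law}(B+y)$. In other words, $X-Y$ would be a Brownian motion independent of $Y$.
\item In fact $X-Y=B$ and $dY_t=(B_t+Y_t)\,dt+dW_t$, so $\mathrm{Cov}(B_T,Y_T)=\int_0^Te^{T-s}s\,ds>0$.
\end{itemize}
With $\sigma_1=0$, $A=0$, $C=\sigma_0=1$, the stated formula even says the posterior equals the prior.

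So keep your fallback of folding $L$ into the energy, but present it as a corrected statement rather than as a proof of \eqref{eq:energy}.
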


\begin{proof}
    Under $\mu_y$, the Brownian motion $B$ is recovered as
    \begin{equation}\label{eq:B_recovery}
        B_t = \sigma_0^{-1}(X_t - x_0 - \sigma_1 y_t).
    \end{equation}
    The transition from $\mu_y$ to $P_{X|Y}(\cdot, y)$ amounts to adding the drift $\beta(X_t)$ to the equation~\eqref{eq:cond_ref}. By the Cameron-Martin-Girsanov theorem applied to $B$,
    \begin{align*}
        \frac{dP_{X|Y}(\cdot, y)}{d\mu_y}(x) &= \exp\!\left(\int_0^T \beta(x_t)^T (\sigma_0 \sigma_0^T)^{-1} \sigma_0 \, dB_t - \frac{1}{2}\int_0^T \beta(x_t)^T (\sigma_0 \sigma_0^T)^{-1} \beta(x_t) \, dt \right).
    \end{align*}
    Substituting~\eqref{eq:B_recovery}, i.e., $\sigma_0 \, dB_t = dx_t - \sigma_1 \, dy_t$, and normalizing yields~\eqref{eq:RN}--\eqref{eq:energy}. The Novikov condition is satisfied since $\beta$ has linear growth and $X$ has bounded moments on $[0,T]$ under $\mu_y$.
\end{proof}

\begin{remark}[Reduction to the independent noise case]
    When $\sigma_1 = 0$, the conditional reference measure $\mu_y$ has no $y$-dependence, and~\eqref{eq:cond_ref} becomes $X_t = x_0 + \sigma_0 B_t$. The free energy~\eqref{eq:energy} reduces to the standard Girsanov density of Mitter and Newton~\cite{mitter2003variational}. Thus, Theorem~\ref{thm:var} contains the independent noise formulation as a special case.
\end{remark}

\begin{remark}[Interpretation]
    The reference measure $\mu_y$ encodes the structural coupling between signal and observation.
    The energy $H$ encodes the drift information. The free energy minimization~\eqref{eq:var} combines these two sources of information, and shows that among all measures on signal paths, the filter uniquely minimizes the sum of the informational cost of deviating from the correlated reference dynamics and the expected energy from the drift. In the independent noise case, the coupling is trivial and the reference reduces to the prior; in the correlated case, the reference necessarily depends on the observation path through the shared noise.
\end{remark}

\begin{remark}[Non-degeneracy of $\sigma_0$]
    The invertibility of $\sigma_0$ is essential. It ensures that the ``private'' noise $B$ is recoverable from $(x, y)$ via~\eqref{eq:B_recovery}, and that the Girsanov change of measure between $\mu_y$ and $P_{X|Y}(\cdot, y)$ is well-defined. When $\sigma_0 = 0$ (i.e., signal is driven entirely by the correlated noise), conditioning on $Y = y$ determines $X$ up to the drift, and the conditional law degenerates. Thus, no variational formulation of this type is possible.
\end{remark}

\section{Conclusion}
We have shown that the Mitter--Newton variational formulation of nonlinear filtering fails when signal and observation diffusions share a common noise source, and that this failure is fundamental. We introduced a conditional variational principle that replaces the prior with a reference measure $\mu_y$
that preserves the noise correlation structure, and showed that this formulation resolves the limitation in the linear setting while strictly containing~\cite{mitter2003variational} as a special case. Extending this formulation to the nonlinear setting remains an open problem.

\section*{Acknowledgment}
This paper was funded by the Office of Naval Research (ONR) through grant number FA9550-24-1-0210.

\bibliographystyle{amsplain}
\bibliography{ref}
\end{document}